\theoremstyle{plain}
\newtheorem{thm}{Theorem}[section]
\newtheorem{lem}[thm]{Lemma}
\newtheorem{prop}[thm]{Proposition}
\newtheorem{cor}[thm]{Corollary}
\newtheorem{notation}[thm]{Notation}
\newtheorem{conjecture}[thm]{Conjecture}
\theoremstyle{defn}
\newtheorem{defn}[thm]{Definition}
\theoremstyle{remark}
\begin{document}

\begin{titlepage}
    \begin{center}
        \vspace*{1cm}
        \Huge
        \textbf{Elementary Results on Forbidden Minors}
        \vspace{3.0cm}
        \Large
        
        \textbf{Arnold Tan Junhan} 
        \vspace{2.0cm}
        \Large
        
        \textbf{Michaelmas 2018 Mini Projects: Graph Theory}
        \vfill
        \vspace{0.8cm}
        \Large
        University of Oxford   
    \end{center}
\end{titlepage}
\tableofcontents
\section{Abstract}
\noindent Throughout this report $G$, $H$, $K$, etc. will denote graphs. Our graphs are finite, always have at least one vertex, and do not have loops or multi-edges.
\\

\noindent We first build up some theory to state \textit{Wagner's Theorem}, and then prove it using a well-known result called \textit{Kuratowski's Theorem}. Actually, the two are equivalent, but we will not prove the latter. (For a proof of Kuratowski's Theorem, see Diester (2000).)
\\

\noindent Following this, we establish some connections between the chromatic number of a graph and some of its forbidden minors. The idea is that if we forbid $G$ to have certain graphs as a minor, then the chromatic number of $G$ cannot be too large. Intuitively, this makes sense: if we disallow $G$ from having too many edges, then this makes it easier to colour the graph with fewer colours; we will of course make this precise.
\\

\noindent We end by stating a well-known conjecture that generalises our work.
\section{Wagner's Theorem}

\begin{defn} \label{minor}
A graph $H$ is a \textbf{minor} of a graph $G$ (or \textbf{$G$ has an $H$-minor}) if there is a sequence $G=G_0, G_1, \ldots, G_k=H$ such that, for $i= 1, \ldots, k$, $G_i$ is obtained from $G_{i-1}$ by contracting an edge, or deleting an edge or a vertex.
\end{defn}

\noindent (We allow $G$ to be a minor of itself. Also, in practice, we only care what the minor $H$ is up to isomorphism: if $G$ has a $K$-minor, and $H \cong K$, then we may simply say $G$ has an $H$-minor.)
\\

\noindent There is an equivalent formulation of minors:

\begin{prop} \label{minoralt}
Graph $H$ is a minor of graph $G$ if and only if, writing $V(H) = {v_1, \ldots, v_h}$, there are nonempty disjoint subsets $V_1, \ldots, V_h$ of $V(G)$ such that $G[V_i]$ is connected for each $i$ and $e(V_i,V_j)>0$ whenever $v_iv_j \in E(H)$.
\end{prop}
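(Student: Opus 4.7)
\medskip

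\noindent\textbf{Proof proposal.} I would prove the two directions separately, using induction for the forward direction and an explicit construction for the backward direction.

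For the forward direction, suppose $H$ arises from $G$ via a sequence $G = G_0, G_1, \ldots, G_k = H$. The plan is to track, at each stage $i$, a ``branch set'' assignment $\phi_i : V(G_i) \to \mathcal{P}(V(G))$ which records, for each vertex $x$ of $G_i$, the set of vertices of $G$ that have been merged into $x$ so far. Set $\phi_0(x) = \{x\}$. If the step $G_i \to G_{i+1}$ is an edge deletion, leave $\phi$ unchanged. If it is a vertex deletion, simply restrict $\phi$. If it is the contraction of an edge $uv$ into a new vertex $w$, define $\phi_{i+1}(w) = \phi_i(u) \cup \phi_i(v)$ and leave the other values untouched. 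Finally set $V_i = \phi_k(v_i)$. Then I would prove by induction on $i$ that (a) the sets $\{\phi_i(x) : x \in V(G_i)\}$ are pairwise disjoint and nonempty, (b) each $G[\phi_i(x)]$ is connected, and (c) whenever $xy \in E(G_i)$ there is an edge of $G$ with one endpoint in $\phi_i(x)$ and the other in $\phi_i(y)$. The contraction case of (b) is the crucial one: the two previously connected subgraphs are joined by the contracted edge, so their union remains connected. Specialising at $i = k$ gives the desired branch sets $V_1, \ldots, V_h$.

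For the backward direction, suppose branch sets $V_1, \ldots, V_h$ with the stated properties are given. I would realise $H$ as a minor of $G$ by the following operations, in order: first delete every vertex of $V(G) \setminus (V_1 \cup \cdots \cup V_h)$; next, for each $i$, fix a spanning tree of $G[V_i]$ and contract all of its edges in turn, leaving a single vertex that I identify with $v_i$; finally, delete any remaining edges between the $v_i$'s that do not correspond to edges of $H$, as well as any resulting parallel edges from the contractions. The hypothesis $e(V_i, V_j) > 0$ for $v_i v_j \in E(H)$ guarantees that the edges we need in $H$ survive the contractions, while all other edges can be removed by deletion.

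The main obstacle is the bookkeeping in the forward direction: verifying (a), (b), (c) simultaneously through all three types of step requires care, especially in the contraction step where two branch sets merge and where edge incidences must be rerouted. The backward direction is essentially a clean construction; the only subtlety is making sure the spanning-tree contractions within each $V_i$ do not inadvertently create or destroy edges between different branch sets, but since contractions only identify endpoints of the contracted edge, incidences between distinct $V_i$ and $V_j$ are preserved.
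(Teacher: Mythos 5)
The paper states Proposition~\ref{minoralt} without proof, so there is no argument of its own to compare against. Your two-direction proof is correct and is the standard one for this folklore equivalence. In the forward direction, the branch-set bookkeeping $\phi_i$ with invariants (a)--(c) does work: (a) and (c) go through routinely for all three operation types, and the contraction case of (b) is indeed the only place where something must be argued, namely that an edge of $G_i$ between $u$ and $v$ forces, by (c), an edge of $G$ joining $\phi_i(u)$ and $\phi_i(v)$, so the union of the two connected induced subgraphs remains connected. In the backward direction, deleting $V(G)\setminus(V_1\cup\cdots\cup V_h)$, contracting a spanning tree of each $G[V_i]$, and then deleting surplus edges does realise $H$ as a minor. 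One small remark: you worry aloud about parallel edges and loops produced by contraction, but under the paper's own formalisation of contraction (visible in the proof of Corollary~\ref{implications}) the contracted graph is simple by construction --- loops and multi-edges are never created --- so no separate clean-up step is needed; the only deletions required at the end are of edges $v_iv_j$ with $e(V_i,V_j)>0$ but $v_iv_j\notin E(H)$, which you already handle.
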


\begin{defn} \label{subdivision}
\textbf{Subdividing} an edge $xy$ of a graph $H$ deletes the edge $xy$ and adds a new vertex $v$ with neighbours $x$ and $y$. $G$ is a \textbf{subdivision} of $H$ if it can be obtained from $H$ by a sequence of subdivisions of edges -- that is, there is a sequence $H=H_0$,$\ldots$,$H_k=G$ such that for each $1 \leq i \leq k$, $\exists v_i \in V(H_i) \backslash V(H_{i-1}), w_iu_i \in E(H_{i-1})$ such that $H_i=(V(H_{i-1}) \cup \{ v_i \},(E(H_{i-1}) \backslash \{ w_iu_i \} ) \cup \{ v_iw_i, v_i u_i \} )$. $H$ is a \textbf{topological minor} of $G$ if $G$ contains a subdivision of $H$ as a subgraph.
\end{defn}

\noindent Note that $G$ being a subdivision of $H$ means precisely that we can replace each edge $uv$ in $H$ by some path from $u$ to $v$ (adding some vertices to $H$), such that these paths are internally vertex disjoint, and $G$ is the resulting graph.

\begin{notation}
Write $H \leq G$ to mean $G$ contains $H$, $H \leq_m G$ to mean $G$ has an $H$-minor, and $H \leq_t G$ to mean $G$ contains a subdivision of $H$.
\end{notation}


\noindent We will use either equivalent formulation of minors, depending on which is convenient to us. There are some obvious properties of minors and subdivisions.

\begin{lem} \label{trans}
If $H \leq_m G$ and $K \leq_m H$, then $K \leq_m G$.
\end{lem}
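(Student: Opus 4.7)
The plan is to use the sequence-based definition of minor (Definition \ref{minor}) directly, and prove transitivity by concatenating witnessing sequences. By hypothesis, there exist sequences $G = G_0, G_1, \ldots, G_k = H$ and $H = H_0, H_1, \ldots, H_\ell = K$, where each successive graph is obtained from its predecessor by an edge contraction, edge deletion, or vertex deletion. The natural candidate sequence witnessing $K \leq_m G$ is then
\[
G = G_0,\ G_1,\ \ldots,\ G_k = H = H_0,\ H_1,\ \ldots,\ H_\ell = K.
\]

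First I would verify that this concatenated sequence satisfies Definition \ref{minor}: each step from $G_{i-1}$ to $G_i$ (for $1 \le i \le k$) and each step from $H_{j-1}$ to $H_j$ (for $1 \le j \le \ell$) is by assumption one of the three allowed operations, and the first and last graphs in the combined list are $G$ and $K$ respectively.

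The one subtlety to address is the parenthetical remark in the excerpt that minors are considered only up to isomorphism: a witnessing sequence for $H \leq_m G$ may end in a graph merely isomorphic to $H$, rather than equal to $H$ on the nose. I would handle this by noting that any isomorphism $\varphi: G_k \to H$ transports the operations in the second sequence to the same operations on $G_k$, so without loss of generality we may assume $G_k = H_0$. No significant obstacle arises here; the result is essentially immediate from the definition once this bookkeeping is in place.

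As an alternative that might be worth a brief remark, the same claim follows cleanly from Proposition \ref{minoralt}: if $V_1, \ldots, V_h \subseteq V(G)$ witness $H \leq_m G$ and $W_1, \ldots, W_k \subseteq V(H)$ witness $K \leq_m H$, then taking $U_i := \bigcup_{v_j \in W_i} V_j$ gives the desired branch sets in $G$, with connectivity of $G[U_i]$ following from the connectivity of each $G[V_j]$ together with the edges guaranteed between $V_j$'s whenever the corresponding vertices are adjacent in $H[W_i]$.
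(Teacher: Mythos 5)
Your proof is correct and takes the same approach as the paper: concatenating the two witnessing sequences from Definition~\ref{minor}. The extra remarks on handling isomorphism and the alternative branch-set argument via Proposition~\ref{minoralt} are both sound but go beyond what the paper records.
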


\begin{proof}
Suppose that the sequence $G=G_0, G_1, \ldots, G_k=H$ witnesses $H \leq_m G$, and the sequence $H=H_0, H_1, \ldots, H_l=K$ witnesses $K \leq_m H$. Then the sequence $G=G_0, G_1, \ldots, G_k=H=H_0, H_1, \ldots, H_l=K$ witnesses $K \leq_m G$.
\end{proof}

\noindent It is also clear that if $G$ has $H$ as a subgraph, then $G$ has $H$ as a minor. (Remove the vertices in $V(G)\backslash V(H)$ one by one, in any order, to get a sequence of subgraphs that witnesses $H \leq_m G$.) In fact:

\begin{cor} \label{implications}
$H \leq G \Rightarrow H \leq_t G \Rightarrow H \leq_m G$.
\end{cor}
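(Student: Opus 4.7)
The plan is to prove the two implications separately, leaning on the text's observation that any subgraph is a minor together with Lemma \ref{trans} for transitivity. Both implications turn out to be essentially structural, and I expect no real difficulty with either beyond a small bookkeeping check.

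For $H \leq G \Rightarrow H \leq_t G$, I would observe that $H$ is trivially a subdivision of itself: Definition \ref{subdivision} allows the defining sequence to have length $k=0$, in which case $H_k = H_0 = H$. Hence if $G$ contains $H$ as a subgraph, $G$ contains a subdivision of $H$ as a subgraph, which is exactly $H \leq_t G$.

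For $H \leq_t G \Rightarrow H \leq_m G$, pick a subdivision $H'$ of $H$ contained in $G$ as a subgraph. The text has already noted that subgraphs are minors, so $H' \leq_m G$; by Lemma \ref{trans} it is enough to establish $H \leq_m H'$. The natural approach is to reverse the subdivisions one step at a time. Let $H = H_0, H_1, \ldots, H_k = H'$ be the sequence from Definition \ref{subdivision}. Each transition $H_{i-1} \to H_i$ deletes some edge $w_iu_i$ and adds a new vertex $v_i$ with two new edges $v_iw_i$ and $v_iu_i$. Contracting the edge $v_iw_i$ in $H_i$ merges $v_i$ into $w_i$; since the only neighbours of $v_i$ in $H_i$ are $w_i$ and $u_i$, the merged vertex's edge set differs from $w_i$'s only by the addition of a single edge to $u_i$. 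Performing these contractions from $H_k$ back to $H_0$ in order then yields the desired witness sequence for $H \leq_m H'$.

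The one place I would be careful is checking that each reverse step actually returns $H_{i-1}$ in the strict sense of Definition \ref{minor}, meaning that no multi-edge is created (disallowed by the abstract). This is immediate: $w_i$ and $u_i$ were non-adjacent in $H_i$ because the edge $w_iu_i$ was deleted when subdividing, so contracting $v_iw_i$ produces a single edge from the merged vertex to $u_i$ rather than a duplicate. With that sanity check in hand, the contraction sequence witnesses $H \leq_m H'$ and the corollary follows.
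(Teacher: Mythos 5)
Your proof is correct and takes essentially the same route as the paper: $H$ is a length-zero subdivision of itself for the first implication, and for the second you reverse the subdivision sequence via edge contractions $H_i/(v_iw_i) \to H_{i-1}$ and invoke transitivity (Lemma \ref{trans}) together with the subgraph-implies-minor observation. Your explicit check that no multi-edge arises (since $w_i u_i$ was deleted when subdividing) is a nice touch that the paper's computation also implicitly covers.
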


\begin{proof}
Certainly if $H$ is a subgraph of $G$, then $G$ contains a subdivision of $H$, namely $H$ itself. This proves the first implication. For the second, suppose $G$ contains a subdivision, say $H'$, of $H$. That is, $H' \leq G$, and there is a sequence $H=H_0, \ldots, H_k=H'$ witnessing that $H'$ is a subdivision of $H$. It is enough to show that $H$ is a minor of $H'$, for then we would have $H \leq_m H' \leq_m G$, and we can finish by Lemma \ref{trans}. Define a sequence $H'=H'_0, \ldots, H'_k=H$ by setting $H'_j=H_{k-j}$ for each $0 \leq j \leq k$. This sequence witnesses that $H$ is a minor of $H'$, since each graph $H'_{k-i+1}=H_{i-1}$ in the sequence is obtained from the previous graph $H'_{k-i}=H_i$ (where $1 \leq i \leq k$) by an edge contraction. To be pedantic, recalling that $H_i=(V(H_{i-1}) \cup \{ v_i \},(E(H_{i-1}) \backslash \{ w_iu_i \} ) \cup \{ v_iw_i, v_i u_i \} )$, we have

\begin{align*} 
& \ \ \ \ H_i/(v_iw_i) \\
&= ((V(H_{i-1}) \backslash \{ w_i \}) \cup \{w'_i\},(E(H_{i-1}) \backslash \{ \text{edges containing }w_i \}) \cup \{ w'_ix: x\text{ adjacent in }H_i \text{ to } v_i\text{ or }w_i; x \neq v_i,w_i \}) \\
 &= ((V(H_{i-1}) \backslash \{ w_i \}) \cup \{w'_i\},(E(H_{i-1}) \backslash \{ \text{edges containing }w_i \}) \cup \{ w'_ix: x\text{ adjacent in }H_{i-1} \text{ to } w_i\}) \\
 &\cong  (V(H_{i-1}),E(H_{i-1})) \\
 &=  H_{i-1}. \\
\end{align*}

\noindent This completes the proof that $H$ is a minor of $H'$, hence the result holds.
\end{proof}

\begin{cor} \label{q1}
If $G$ is a subdivision of $H$, then $H$ is a minor of $G$.
\end{cor}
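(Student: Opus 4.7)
The plan is to derive this statement as an immediate consequence of Corollary \ref{implications}, since the hard work has already been done there. The key observation is that the hypothesis "$G$ is a subdivision of $H$" is a stronger assumption than the hypothesis "$G$ contains a subdivision of $H$ as a subgraph" appearing in the definition of topological minor, because $G$ is trivially a subgraph of itself.

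More precisely, I would first note that $G$ is a subgraph of $G$, so if $G$ is a subdivision of $H$, then $G$ contains a subdivision of $H$ (namely $G$ itself) as a subgraph. By Definition \ref{subdivision}, this means $H \leq_t G$. Then applying the second implication of Corollary \ref{implications}, we conclude $H \leq_m G$, which is exactly what we wanted to show.

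There is really no obstacle here; the content of the corollary was already established inside the proof of Corollary \ref{implications}, where we exhibited an explicit sequence of edge contractions from a subdivision of $H$ back to $H$ itself. One could alternatively write the proof directly by reversing the subdivision sequence $H = H_0, H_1, \ldots, H_k = G$ and observing that each reverse step $H_i \rightsquigarrow H_{i-1}$ is the contraction of one of the two new edges $v_iw_i, v_iu_i$ created at step $i$, which undoes the subdivision. However, quoting Corollary \ref{implications} is cleaner and avoids repeating the pedantic calculation with edge sets.
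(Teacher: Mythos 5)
Your proof is correct and matches the paper's own argument: the paper likewise observes that $G$ being a subdivision of $H$ gives $H \leq_t G$, and then applies Corollary \ref{implications} to conclude $H \leq_m G$, while also noting (as you do) that the explicit reverse-contraction argument already appears inside the proof of that corollary.
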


\begin{proof}
We have already shown this in the proof of Corollary \ref{implications}, but it also follows from the corollary, since if $G$ is a subdivision of $H$, then $H \leq_t G$, so $H \leq_m G$, so $H$ is a minor of $G$.
\end{proof}

\noindent On the other hand, we might ask, if $H$ is a minor of $G$, must $G$ contain a subdivision of $H$ as a subgraph? The answer, in general, is no. Famously, $K_5$ is a minor of the Petersen graph, but not a topological minor.

\begin{prop}
Let $P$ be the \textbf{Petersen graph}, defined by $$(V(P),E(P))=(\{u_0,\ldots, u_4, v_0,\ldots, v_4\}, \{u_iu_{i+1}, \ u_iv_i, \ v_iv_{i+2} : i=0, \ldots, 4\})$$
where subscripts are read modulo $5$.
\\
$K_5$ is a minor of $P$. but $P$ does not contain a subdivision of $K_5$ as a subgraph.
\end{prop}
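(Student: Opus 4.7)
The plan is to handle the two claims separately. For the minor claim, I would invoke the branch-set characterisation from Proposition \ref{minoralt}, choosing the natural pairing $V_i=\{u_i,v_i\}$ for $i=0,\ldots,4$. Each $V_i$ is connected (by the spoke edge $u_iv_i$), and the five sets are disjoint and exhaust $V(P)$. It remains to verify $e(V_i,V_j)>0$ for every $i\neq j$; the candidate edges between $V_i$ and $V_j$ are $u_iu_j$ (present iff $j\equiv i\pm 1\pmod 5$) and $v_iv_j$ (present iff $j\equiv i\pm 2\pmod 5$), and since $\{\pm 1,\pm 2\}$ exhausts all nonzero residues modulo $5$, some such edge exists for every pair $i\neq j$. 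This gives $K_5\leq_m P$ with no case work beyond the above parity check.

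For the second claim, I would use a simple degree argument. Suppose for contradiction that $P$ contains a subdivision $H'$ of $K_5$ as a subgraph. By Definition \ref{subdivision}, $H'$ arises from $K_5$ by iteratively subdividing edges, each such operation only adding vertices of degree $2$ and leaving the degrees of the original vertices unchanged. Hence $H'$ has five distinguished ``branch'' vertices, each of degree exactly $4$ in $H'$. Since $H'\leq P$, each branch vertex has degree at least $4$ in $P$. But every vertex of $P$ has degree $3$: indeed $u_i$ has neighbours $u_{i-1},u_{i+1},v_i$ and $v_i$ has neighbours $u_i,v_{i-2},v_{i+2}$. This contradicts the existence of a single vertex of degree $\geq 4$, let alone five, so no such subdivision can be contained in $P$.

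Neither step presents any real obstacle; the only thing to be careful about is to formulate the degree argument in terms of the subdivision $H'$ itself rather than the abstract graph $K_5$, and to observe that subdivision preserves the degrees of the original vertices (a direct consequence of Definition \ref{subdivision}, since each subdivision move replaces an edge $wu$ with a path $w,v,u$, leaving the degrees of $w$ and $u$ unchanged and giving $v$ degree $2$).
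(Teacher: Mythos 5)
Your proposal is correct and matches the paper's proof essentially step for step: the same branch sets $V_i=\{u_i,v_i\}$ for the minor, and the same degree argument (subdivisions preserve branch-vertex degrees, but $P$ is $3$-regular while $K_5$ has maximum degree $4$) for the nonexistence of a $K_5$-subdivision.
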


\begin{proof}
Note that each $u_i$ is neighbours precisely with $u_{i-1}$, $u_{i+1}$ and $v_i$, and each $v_i$ is neighbours precisely with $v_{i-2}$, $v_{i+2}$ and $u_i$. In either case, there are exactly three distinct neighbours, since we are reading subscripts modulo $5$. Therefore $P$ is $3$-regular.
\\

\noindent For the first claim, let us use our alternative formulation of minors. To see that $K_5$ is a minor of $P$, consider the disjoint nonempty subsets $V_0=\{ u_0,v_0\}, \ldots, V_4=\{u_4,v_4\}$. Each $G[V_i] = (\{ u_i, v_i \},\{ u_iv_i \})$ is obviously connected, so it remains to show that $e(V_i,V_j) > 0$ whenever $i \neq j$ ($i,j \in \{0,\ldots, 4\}$). If $i,j$ differ modulo $5$ by $1$, then without loss $j=i+1$ so $u_iu_j \in E(P)$ shows that $e(V_i,V_j)>0$. If $i,j$ differ modulo $5$ by $2$, then without loss $j=i+2$ so $v_iv_j \in E(P)$ shows that $e(V_i,V_j)>0$. There are no other possibilities, so we are done.
\\

\noindent Let us prove the second claim. It follows by definition that subdividing an edge of a graph $H$ creates a new vertex of degree $2$, but does not change the degree of any vertex of $H$. Therefore, if $H'$ is a subdivision of $H$ and $\Delta(H) \geq 2$, then $\Delta(H')=\Delta(H)$, and so if $G$ contains a subdivision of $H$, then $\Delta(G) \geq \Delta(H)$. In particular, when $G$ is the Petersen graph $P$, and $H$ is $K_5$, then this says that if the Petersen graph contained a subdivision of $K_5$, then we would have $3 = \Delta(P) \geq \Delta(K_5) = 4$: an obvious contradiction. Therefore $K_5 \nleq_t P$.
\end{proof}

\noindent We might wonder if the result becomes true under some simple additional assumption. This is indeed the case, but before stating the result we give a useful lemma.

\begin{lem} \label{minimal}
Let $G$ and $H$ be graphs, where $V(H)=\{v_1,\ldots,v_h\}$. Let $H$ be a minor of $G$, and let this be witnessed by the nonempty disjoint subsets $V_1,\ldots, V_h$ from Proposition \ref{minoralt}. Suppose that no proper subgraph of $G$ contains $H$ as a minor. Then,

\begin{enumerate}

\item Each $G[V_i]$ is minimally connected, i.e., a tree.
\item Whenever $v_iv_j \in E(H)$, we have $e(V_i,V_j)=1$.
\item Whenever $v_iv_j \notin E(H)$ then $e(V_i,V_j)=0$.
\item For every leaf $w$ of a tree $G[V_i]$ of size larger than $1$, we can find some $j \neq i$ such that $e(\{w\}, V_j)>0$.
\item $G[V_i]$ has at most $d(v_i)$ leaves.
\item $V_1,\ldots, V_h$ cover $V(G)$.
\end{enumerate}
\end{lem}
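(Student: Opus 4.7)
The overall strategy will be a minimality argument applied to each part: assume the stated property fails, then produce a proper subgraph $G' \subsetneq G$ which still contains $H$ as a minor, witnessed via Proposition~\ref{minoralt} by the same subsets $V_1,\ldots,V_h$ (or a small modification), contradicting the hypothesis on $G$. Parts (1)--(3) and (6) are the routine cases. For (1), if $G[V_i]$ were connected but not a tree, it would contain a cycle, and removing any cycle edge would leave it connected, yielding a proper subgraph in which the witness is unchanged. For (2), if $e(V_i,V_j) \geq 2$ for some $v_iv_j \in E(H)$, I would delete all but one of the cross edges between $V_i$ and $V_j$. For (3), if $v_iv_j \notin E(H)$ but $e(V_i,V_j) > 0$, I would delete every edge between $V_i$ and $V_j$; the witness is unaffected since no positivity condition is required for this pair. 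For (6), any vertex $v \in V(G) \setminus \bigcup_i V_i$ is unused by the witness and can simply be deleted.

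Part (4) requires a short tree argument leaning on (1). Suppose $w$ is a leaf of $G[V_i]$, $|V_i| > 1$, and $w$ has no neighbor outside $V_i$. Then $G - w$ is a proper subgraph of $G$, and by (1) the graph $G[V_i]$ is a tree, so $G[V_i \setminus \{w\}]$ is a nonempty subtree, hence connected. Because $w$ contributes no cross edges, each $e(V_i, V_j)$ is unchanged by the deletion, so the modified subsets $V_1, \ldots, V_i \setminus \{w\}, \ldots, V_h$ still witness $H \leq_m G - w$, again contradicting minimality.

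The main obstacle will be (5), which chains the preceding four parts together. By (4), every leaf $w$ of $G[V_i]$ has a neighbor in some $V_j$ with $j \neq i$; by (3), such a $j$ must satisfy $v_iv_j \in E(H)$. I would then define a map $\varphi$ from the leaves of $G[V_i]$ to the neighbors of $v_i$ in $H$ by sending each leaf to some such $j$. The crux is injectivity: if two distinct leaves $w \neq w'$ both mapped to the same $j$, then $e(V_i,V_j)$ would be at least $2$ (one edge contributed by $w$, another by $w'$), contradicting (2). Hence $\varphi$ is injective, so the number of leaves of $G[V_i]$ is at most the number of neighbors of $v_i$ in $H$, which is $d(v_i)$. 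Parts (1)--(4) and (6) each reduce to essentially one line of edge-or-vertex deletion, so (5) is where the real bookkeeping lives.
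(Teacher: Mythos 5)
Your proof is correct and follows the same route as the paper's: minimality (delete an edge or vertex and keep the same witness, or the witness minus $w$) for parts (1)--(4) and (6), and then for part (5) construct the map from leaves to neighbours of $v_i$ in $H$ via (4), confirm it lands in the neighbourhood via (3), and get injectivity via (2). The only cosmetic difference is that you explicitly identify the removable edge in (1) as a cycle edge and delete \emph{all} offending cross edges in (2)--(3) rather than one, but the logic is identical.

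One small omission: in part (5) you should note the trivial case $|V_i|=1$, where $G[V_i]$ has no leaves and the bound $0 \leq d(v_i)$ holds vacuously; this is needed because part (4) only applies when $|V_i|>1$, so your map $\varphi$ is defined only in that case. The paper dispatches this in one sentence, and you should too.
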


\begin{proof}
If the first claim were false, then we could remove an edge from $V_i$ to get a proper subgraph $G_1$ of $G$, where $G_1[V_i]$ is still connected, so the same subsets $V_1,\ldots, V_h$ would witness that the proper subgraph $G_1$ has an $H$-minor.
\\

\noindent If the second claim were false, then there would be at least two edges from some $V_i$ to some $V_j$, and if we remove one of these we get a proper subgraph $G_2$ of $G$ where $G[V_i]=G_2[V_i]$ for each i, and there is still an edge in $G_2$ from $V_i$ to $V_j$ whenever $v_iv_j \in E(H)$. Hence the same subsets $V_1,\ldots, V_h$ would witness that the proper subgraph $G_2$ has an $H$-minor.
\\

\noindent If the third claim were false, then we could find $v_iv_j \notin E(H)$ with an edge between $V_i$ and $V_j$. Removing this edge gives a proper subgraph $G_3$ of $G$ where $G[V_i]=G_3[V_i]$ for each i. Hence the same subsets $V_1,\ldots, V_h$ would witness that the proper subgraph $G_2$ has an $H$-minor.
\\

\noindent If the fourth claim were false, then we could remove $w$ to get a tree $G[V_i] -w =G[V_i \backslash \{w\} ]$. This is still nonempty and connected, and there is still an edge in the proper subgraph $G_4=G-w$ of $G$ from $V_i \backslash \{w\}$ to $V_j$ whenever $v_iv_j \in E(H)$. Hence the subsets $V_1,\ldots, V_i\backslash \{w\}, \ldots, V_h$ would witness that the proper subgraph $G_4=G-w$ has an $H$-minor.)
\\

\noindent We will use the claims above to prove the fifth claim. Without loss $G[V_i]$ has more than one vertex (otherwise $G[V_i]$ has a single vertex, of degree zero -- this is not a leaf). There is an injective function from the set of leaves of $G[V_i]$ to the set $\{j \in \{1,\ldots, h\}: v_j \text{ is a neighbour of }v_i\}$, given by sending a leaf $w$ to any fixed choice of $j$ given by the fourth claim: since $e(\{w\}, V_j)>0$, by the third claim $v_j$ is a neighbor of $v_i$, so this function is well-defined. It is an injection since if two leaves $w,w'$ are sent to the same $j$, then there are edges from $w$ into $V_j$ and from $w'$ into $V_j$. By the second claim, $w=w'$.
\\

\noindent If the sixth claim were false, then $V_1,\ldots, V_h$ would witness that the proper subgraph $G[V_1 \cup \ldots \cup V_h]$ of $G$ has an $H$-minor.
\end{proof}

\begin{prop} \label{converse}
Suppose $H$ is a minor of $G$ and $\Delta(H) \leq 3$. Then $G$ contains a subdivision of $H$.
\end{prop}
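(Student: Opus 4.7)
The plan is to pass to a minimal subgraph $G' \subseteq G$ with $H \leq_m G'$ (such a $G'$ exists since $G$ is finite) and then argue that $G'$ is itself a subdivision of $H$. Applying Lemma~\ref{minimal} to $G'$ gives disjoint nonempty $V_1, \ldots, V_h$ witnessing $H \leq_m G'$ with all six listed properties. For each $v_iv_j \in E(H)$, claim~(2) yields a unique crossing edge; I write $a_{ij} \in V_i$ for its endpoint in $V_i$ and set $S_i := \{a_{ij} : v_j \text{ is a neighbour of } v_i\} \subseteq V_i$.

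The structural core of the argument will be that $T_i := G'[V_i]$ coincides with the Steiner subtree $T_i^*$ of $T_i$ spanned by $S_i$. By claim~(4), every leaf of $T_i$ has an outgoing edge and so lies in $S_i \subseteq V(T_i^*)$; in particular, when $d(v_i) = 0$ the set $S_i$ is empty, $T_i$ has no leaves and so must be a single vertex. For $d(v_i) \geq 1$, if $V_i \setminus V(T_i^*)$ were nonempty, any component $C$ of the induced forest would (since $T_i$ is a tree) attach to $T_i^*$ by a unique edge from a unique vertex $u \in V(C)$, and a leaf of $C$ distinct from $u$ -- or $u$ itself when $|V(C)|=1$ -- would then be a leaf of $T_i$ outside $S_i$, a contradiction. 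Hence $V_i = V(T_i^*)$, and the bound $|S_i| \leq d(v_i) \leq 3$ forces $T_i$ to be a singleton, a path, or a subdivided $K_{1,3}$, with every leaf of $T_i$ lying in $S_i$.

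It then remains to exhibit the $H$-subdivision inside $G'$. A short case analysis using $|S_i| \leq 3$ shows that I can always pick a branch vertex $b_i \in V_i$ such that the elements of $S_i \setminus \{b_i\}$ lie in pairwise distinct components of $T_i - b_i$, and any $a_{ij}$ repeated for several neighbours of $v_i$ coincides with $b_i$: the centre of the Y when $T_i$ is a subdivided $K_{1,3}$, the interior marked vertex when $T_i$ is a path carrying three marks, a multiply-hit endpoint (or an arbitrary endpoint) when $T_i$ is a path carrying two marks, and the sole vertex when $T_i$ is a singleton. For each $v_iv_j \in E(H)$ I then set $P_{ij}$ to be the concatenation of the tree-path from $b_i$ to $a_{ij}$ in $T_i$, the crossing edge, and the tree-path from $a_{ji}$ to $b_j$ in $T_j$; the choice of the $b_i$'s guarantees pairwise internal disjointness of these paths, and a direct degree count in $G'$ confirms that $G'$ is precisely the subdivision of $H$ so defined. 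The main obstacle is the Steiner-tree identification $V_i = V(T_i^*)$ via minimality; the bound $\Delta(H) \leq 3$ is essential precisely at the branch-vertex step, because for $|S_i| \geq 4$ the Steiner tree can branch at two distinct internal vertices (e.g.\ an ``H''-shape), and then no single $b_i$ can separate all of $S_i$.
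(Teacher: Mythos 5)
Your proof is correct, and while it shares the same skeleton as the paper's (reduce to a subgraph $G'$ that is minimal with an $H$-minor, apply Lemma~\ref{minimal}, pick a branch vertex $b_i$ in each tree $G'[V_i]$ and concatenate tree-paths with crossing edges), you add a genuinely useful refinement that the paper does not have. The paper reduces to the minimal case by induction on $e(G)+v(G)$ and then defines $w_{ij}$ as ``the unique \emph{leaf} of $G[V_i]$ adjacent to $V_j$,'' picking $x_i$ to be a leaf when $G[V_i]$ has two leaves. This silently assumes every attachment vertex $a_{ij}$ is a leaf of $G[V_i]$; that can fail. For instance, take $H=K_4$ and $G$ with $V_1=\{a,b,c\}$ inducing the path $a\mathbin{-}b\mathbin{-}c$, singletons $V_2=\{p\},V_3=\{q\},V_4=\{r\}$, crossing edges $ap,bq,cr$, and a triangle $pqr$: this $G$ is minimal with a $K_4$-minor, yet the attachment vertex $b$ is an internal vertex of the path, $w_{13}$ is undefined, and choosing $x_1$ to be the leaf $a$ makes the would-be paths $a\mathbin{-}b$ and $a\mathbin{-}b\mathbin{-}c$ overlap. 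Your argument avoids this: by showing $G'[V_i]$ equals the Steiner subtree of the attachment set $S_i$ (a consequence of claim~(4)), you classify $G'[V_i]$ as a singleton, a path, or a subdivided $K_{1,3}$, and you choose $b_i$ based on the \emph{marked multiset} of attachment points -- in particular the ``interior marked vertex'' of a path carrying three marks -- which correctly handles internal attachments and repeated $a_{ij}$'s. Your closing remark about why $\Delta(H)\le 3$ is needed (an $H$-shaped Steiner tree of four terminals has no single separating vertex) is exactly the right intuition and is not made explicit in the paper. The one cosmetic difference -- passing directly to a minimal subgraph rather than inducting on $e(G)+v(G)$ -- buys nothing substantive, but the Steiner-tree viewpoint and the mark-based branch-vertex choice make your write-up tighter and plug a real hole in the paper's case analysis.
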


\begin{proof}
We prove by induction on $n=e(G)+v(G)$ that for any $H$ with $H \leq_m G$ and $\Delta(H) \leq 3$, we have $H \leq_t G$.
If $n=1$, then $V(G)=1$ so the result is trivial: $H \leq_m G$ if and only if $H \leq_t G$ if and only if $H=G$.
\\
Let $n=e(G)+v(G)>1$, and suppose the result holds for all smaller values of $n$ -- that is, whenever we have graphs $H$ and $G'$, where $\Delta(H) \leq 3$, $H \leq_m G'$, and $e(G')+v(G')<n$, then $H \leq_t G'$. The condition $e(G')+v(G')<n$ holds whenever $G'$ is a proper subgraph of $G$. Therefore, it suffices to prove the following claim:
$$\textit{If }G\textit{ is such that } H \leq_m G \textit{, but no proper subgraph of } G \textit{ has an } H \textit{-minor, where } \Delta(H) \leq 3 \textit{, then } H \leq_t G.$$

\noindent (Once we have the claim, then for general $G$, if a proper subgraph $G'$ contains an $H$-minor, then we may apply the inductive hypothesis to $G'$ to deduce that $H \leq_t G'$. Hence $G$ contains $G'$, which contains a subdivision of $H$, so $G$ certainly contains a subdivision of $H$.)
\\

\noindent Let us prove the claim. Writing $V(H)=\{v_1, \ldots, v_h\}$, there are disjoint nonempty subsets $V_1,\ldots, V_h$ of $V(G)$ such that each $G[V_i]$ is connected, and $e(V_i,V_j)>0$ whenever $v_iv_j \in E(H)$. By the lemma above, each $G[V_i]$ is \textit{minimally} connected, and $e(V_i,V_j)=1$ whenever $v_iv_j \in E(H)$.
\\

\noindent If a tree $G[V_k]$ has a single vertex $w_k$, then we can find some $j \neq k$ such that $e(\{w_k\}, V_j)>0$ if (and only if, by the lemma above) $v_k$ is not isolated in $H$. Moreover, for every leaf $w$ of a tree $G[V_i]$ of size larger than $1$, we can find some $j \neq i$ such that $e(\{w\}, V_j)>0$.
\\

\noindent Denote by $w_{ij}$ the unique leaf $w \in G[V_i]$ with $e(\{w\}, V_j)>0$ (if it exists). In this case, fixing $i$ and running over all neighbours $v_j$ of $v_i$ in $H$ gives us a collection of (not necessarily distinct) leaves $w_{ij}$ (where $j$ varies) in $G[V_i]$. By the lemma above, this collection has size at most $3$, since $\Delta(H) \leq 3$. This means the tree $G[V_i]$ has at most $3$ leaves. In particular there is always a vertex $x_i$ of $G[V_i]$, and paths from $x_i$ to each $w_{ij}$ that are pairwise disjoint except at $x_i$. Indeed, if there is only one leaf $w$ in $G[V_i]$, take $x_i=w$, and the trivial path from $x_i$ to itself. If there are only two leaves, let $x_i$ be one of these, and take our two paths to be the trivial path from $x_i$ to itself, and a path from $x_i$ to the other leaf. (Actually, it is unique, since $G[V_i]$ is a tree.) If there are three leaves $w_{ij_1}$, $w_{ij_2}$, $w_{ij_3}$, first consider a path from $w_{ij_1}$ to $w_{ij_3}$, and a path from $w_{ij_2}$ to $w_{ij_3}$. The last edge in both of these paths has to be the unique edge $xw_{ij_3}$ that meets $w_{ij_3}$. Let us take $x_i$ to be the first vertex in the path from $w_{ij_1}$ to $w_{ij_3}$ that is also in the path from $w_{ij_2}$ to $w_{ij_3}$, and take our three paths to be the three unique paths starting at $x_i$ and ending at each respective leaf. By construction, none of these paths intersect other than at $x_i$.
\\

\noindent In the special case where $G[V_k]$ has a single vertex $w_k$ and $e(\{w_k\}, V_j)=0$ for each $j \neq k$, then $v_k$ is isolated in $H$; we can still define $x_k=w_k$.

\noindent This exhibits, as desired, a subdivision of $H$ as a subgraph $H'$ of $G$. Explicitly, take $H'$ to be the union of subgraphs of $G$ of the form:
\begin{itemize}
    \item paths $Q_{ij}=P_{x_iw_{ij}}(w_{ij}w_{ji})P_{w_{ji}x_j}$, where $v_iv_j \in E(H)$, $P_{x_iw_{ij}}$ is the unique path in $G[V_i]$ from $x_i$ to $w_{ij}$, $w_{ij}w_{ji}$ is an edge in $G$, and $P_{w_{ji}x_j}$ is the unique path in $G[V_j]$ from $x_j$ to $w_{ji}$, and we have concatenated them together to get a path $Q_{ij}$ in $G$;
    \item trees $G[V_k]$ on a single vertex $w_k$, such that $e(\{w_k\}, V_j)=0$ for each $j$.
\end{itemize}

\noindent Note that the collection of paths $Q_{ij}$ are internally vertex disjoint.
\noindent It remains to see that $H'$ is a subdivision of $H$. This is clear, if you let the vertices $x_1,\ldots, x_h$ in $H'$ correspond to the vertices $v_1, \ldots, v_h$ of $H$. Then, whenever $v_iv_j \in E(H)$, the path $Q_{ij}$ from $x_i$ to $x_j$ corresponds to a repeated subdivision of the edge $v_iv_j$.
\end{proof}

\begin{lem} \label{either}
If $K_5$ is a minor of $G$, then $G$ contains a subdivision of either $K_{3,3}$ or $K_5$.
\end{lem}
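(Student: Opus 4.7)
The plan is to apply Lemma \ref{minimal} to a minimal subgraph $G^* \subseteq G$ that still has $K_5$ as a minor. That lemma furnishes disjoint, nonempty branch sets $V_1,\ldots,V_5$ with $G^*[V_i]$ a tree on at most $d_{K_5}(v_i) = 4$ leaves and exactly one edge between $V_i$ and $V_j$ for every $i \ne j$. I would first try to upgrade this minor directly to a topological minor in the spirit of Proposition \ref{converse}: for each $i$, I would seek a ``hub'' vertex $x_i \in V_i$ together with four internally disjoint paths inside $G^*[V_i]$ from $x_i$ to the four vertices of $V_i$ that are incident to the four outgoing edges. If every $V_i$ admits such a hub, then concatenating the internal paths at each $V_i$ with the crossing edges between branch sets exhibits a subdivision of $K_5$ inside $G$, and we are done.

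Otherwise, at least one $V_i$ fails to admit such a hub. A short case analysis on trees of at most four leaves --- accounting for the way in which the four outgoing edges are distributed over the vertices of the tree --- should show that in every failing case one can find two vertices $p_i, q_i \in V_i$ and an edge $e$ on the internal $p_i$-to-$q_i$ path whose removal splits $V_i$ into two nonempty, connected pieces $A_i \ni p_i$ and $B_i \ni q_i$ such that exactly two of the four outgoing edges of $V_i$ leave from $A_i$ (say to $V_a, V_b$) and the other two leave from $B_i$ (say to $V_c, V_d$). The motivating example is the $4$-leaf caterpillar, where $p_i, q_i$ are the two degree-$3$ branching vertices and $e$ is any edge on the path between them.

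With such a split in hand, I would propose the six sets $A_i, V_c, V_d$ on one side and $B_i, V_a, V_b$ on the other as the branch sets of a $K_{3,3}$-minor of $G$, and verify via Proposition \ref{minoralt} that they are pairwise disjoint, each induces a connected subgraph (the $V_j$'s are trees, and $A_i, B_i$ are the two connected halves of the split tree), and that all nine required cross-edges are present: $A_i$--$B_i$ via the removed edge $e$; $A_i$--$V_a$, $A_i$--$V_b$, $B_i$--$V_c$, $B_i$--$V_d$ from the four outgoing edges of $V_i$; and the four edges among $\{V_a,V_b,V_c,V_d\}$ crossing the bipartition, which exist because $K_5$ is complete. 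Finally, since $\Delta(K_{3,3}) = 3$, Proposition \ref{converse} promotes this minor to a topological minor, giving $K_{3,3} \leq_t G$ as required.

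The main obstacle I expect is the tree-structural case analysis producing both the failure of the hub \emph{and} the splitting edge $e$: one must verify that whenever no single vertex of $V_i$ reaches all four outgoing edges along internally disjoint paths, the four outgoing edges can nonetheless be partitioned $2$-$2$ between the two connected halves $A_i, B_i$ of $G^*[V_i] - e$ for some edge $e$. This reduces to checking a short finite list of possible tree-with-outgoing-edge configurations on at most four leaves; once that bookkeeping is settled, the rest of the argument is a direct translation of the resulting structure into the language of Proposition \ref{minoralt} followed by an appeal to Proposition \ref{converse}.
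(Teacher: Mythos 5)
Your proposal is correct and follows essentially the same route as the paper. The paper also starts from a minimal subgraph with a $K_5$-minor, applies Lemma \ref{minimal}, augments each branch-set tree $H[V_i]$ by its four outgoing edges to form a tree $T_i$ with exactly four leaves, and then uses the handshaking lemma to force the dichotomy you describe informally: either $T_i$ has a single degree-$4$ hub (your hub case, giving the $K_5$-subdivision when all $i$ land here), or $T_i$ has exactly two degree-$3$ vertices (your failure case); in the latter case the paper contracts $V_i$ onto those two vertices and the other $V_j$ to singletons to exhibit a $K_{3,3}$-minor, which is exactly the $2$--$2$ split across an internal edge that you propose, and it likewise finishes by invoking Proposition \ref{converse} since $\Delta(K_{3,3})=3$. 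The only presentational difference is that the paper's handshaking computation ($r_3 + 2r_4 = 2$ for $T_i$) replaces your ``short finite case analysis,'' making the dichotomy between the hub case and the $2$--$2$ split case automatic rather than something to be checked by enumeration.
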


\begin{proof}
If $K_5$ is a minor of $G$, let $H$ be a minimal subgraph of $G$ containing a $K_5$-minor -- that is, $H$ contains a $K_5$-minor, but no proper subgraph of $H$ contains a $K_5$-minor. By Proposition \ref{minoralt}, there exist disjoint nonempty subsets $V_1, \ldots, V_5$ such that each $H[V_i]$ is connected, and $e(V_i, V_j)>0$ for distinct $i,j \in \{1,\ldots, 5\}$. By Lemma \ref{minimal}, each $H[V_i]$ is a tree, and $e(V_i,V_j)$ for distinct $i,j$. That is, fixing $i$, there is exactly one edge from $V_i$ to each of the other four $V_j$. Consider the tree $T_i$ obtained from the tree $H[V_i]$ by adding these four edges. ($T_i$ is indeed a tree since, for instance, adding these edges does not create cycles or destroy connectivity.) Note that $T_i$ has exactly $4$ leaves, one in each of the $V_j$ other than $V_i$ itself. (Adding the four edges to $H[V_i]$ created these four leaves, and also increased the degree of any leaf of $H[V_i]$.)
We consider two possibilities for the form of $T_i$.
\\
First, note that a tree $T$ has at least $\Delta(T)$ leaves. (Let $v \in V(T)$ have degree $\Delta(T)$; for each edge $vu$ consider a maximal path starting with $vu$; by maximality this path ends at a leaf. We get $\Delta(T)$ paths starting at $v$ and ending at different leaves, because these paths are necessarily disjoint except at $v$ -- otherwise $T$ would have a cycle.) Therefore $\Delta(T_i) \leq 4$. The handshaking lemma now says $2(3+r_2+r_3+r_4)=4+2r_2+3r_3+4r_4$, where $r_i$ denotes the number of vertices of degree $i$. This simplifies to $2=r_3+2r_4$, so we have the two cases below for $T_i$.

\begin{itemize}
    \item $T_i$ has one vertex $x_i$ of degree $4$, four leaves, possibly some vertices of degree $2$, and no other vertices. (That is, $T_i$ is a subdivision of $K_{1,4}$.)
    \item $T_i$ has two vertices $y^1_i,y^2_i$ of degree $3$, four leaves, possibly some vertices of degree $2$, and no other vertices.
\end{itemize}

\noindent If each of the five $T_i$ fall under the first case, then we have that $H$ is a subdivision of $K_5$. (Consider the vertices $x_1, \ldots, x_5$. Between each two of these there is a path in $H$; these paths are internally vertex disjoint.)
\\

\noindent If some $T_i$ falls under the second case, we claim $H$ has a $K_{3,3}$-minor. Once we have this, then we are done: since $\Delta(K_{3,3})=3$, by Proposition \ref{converse} $H$ (and hence $G$) contains a subdivision of $K_{3,3}$. It remains to show $H$ has a $K_{3,3}$-minor. Well, if we contract $G[V_i]$ onto the two vertices $y^1_i,y^2_i$ of $T_i$ having degree $3$, and contract the other $G[V_j]$ onto singletons, then we get six-vertex graph that contains $K_{3,3}$.

\end{proof}

\begin{thm}[Kuratowski's Theorem] \label{kuratowski}
A graph $G$ is planar if and only if $G$ does not contain a subdivision of $K_5$ or of $K_{3,3}$.
\end{thm}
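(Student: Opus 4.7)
My plan is to prove the forward direction by Euler's formula and attack the converse by induction on $e(G)+v(G)$ applied to a minimal counterexample, reducing step by step to the $3$-connected case and then invoking the classical fact that every $3$-connected graph on at least five vertices contains an edge whose contraction leaves the graph $3$-connected.

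For the forward direction, Euler's formula combined with the standard face-edge counts ($2e(G) \geq 3f(G)$ for a simple planar graph, or $2e(G) \geq 4f(G)$ if $G$ is also triangle-free) shows that $K_5$ and $K_{3,3}$ are non-planar. Planarity is clearly preserved under taking subgraphs and under subdividing edges (a new degree-$2$ vertex can be inserted at any point along a drawn edge), so no planar graph contains a subdivision of $K_5$ or of $K_{3,3}$.

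For the converse, let $G$ be a counterexample minimizing $e(G)+v(G)$. I would first argue $G$ is $3$-connected. If $G$ is disconnected or has a cut-vertex $x$, then each component or block is smaller and still avoids the forbidden subdivisions, hence is planar by minimality; these planar drawings glue at $x$ in distinct faces of one another to plane-embed $G$, a contradiction. If $G$ has a $2$-cut $\{x,y\}$, split $G$ into the two pieces $G_1$ and $G_2$ on either side of $\{x,y\}$, adjoining the edge $xy$ to each if absent; any forbidden subdivision in a piece $G_i$ lifts to one in $G$ upon replacing the added edge by an $xy$-path through the other piece, so by minimality both pieces are planar, and their drawings glue along $xy$ placed on a common outer face, again a contradiction.

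Reduced to the $3$-connected case, I invoke the lemma that some edge $e=uv$ has $G/e$ still $3$-connected. By minimality $G/e$ is planar; by Whitney's theorem its planar embedding is combinatorially unique, so the neighbours of the contracted vertex $w$ have a well-defined cyclic order around $w$ consisting of the vertices of $(N(u)\cup N(v))\setminus\{u,v\}$. If this cyclic sequence splits into an arc carrying $N(u)\setminus\{v\}$ followed by an arc carrying $N(v)\setminus\{u\}$, then the embedding of $G/e$ lifts to a plane drawing of $G$ by re-splitting $w$ into $u$ and $v$, contradicting the non-planarity of $G$. Otherwise the neighbours of $u$ and $v$ interleave around $w$, and one extracts a subdivision of $K_{3,3}$ in general, or of $K_5$ in the special subcase that $u$ and $v$ share at least three common neighbours. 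The main obstacle is precisely this final step: from the combinatorial interleaving data one must exhibit the branch vertices and the internally disjoint connecting paths of an explicit $K_{3,3}$- or $K_5$-subdivision, which requires a careful case split according to the overlap $|N(u)\cap N(v)|$ and the pattern of interleaving, and repeated appeals to $3$-connectivity to guarantee enough internally disjoint paths through the rest of $G$.
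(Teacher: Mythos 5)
The paper does not prove Kuratowski's Theorem; it states it and defers to Diestel (2000), then uses it as a black box to deduce Wagner's Theorem. So there is no in-paper proof to compare your attempt against -- what you are really being asked to supply is a proof from scratch.

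Your outline follows the standard Thomassen contractible-edge strategy (essentially the argument Diestel gives). The forward direction via Euler's formula is correct, and the reduction of a minimal counterexample to the $3$-connected case is the right idea, though you should verify the arithmetic that $e(G_i+xy)+v(G_i+xy)<e(G)+v(G)$ (it holds because, once $G$ is $2$-connected, the other piece contributes at least one extra vertex and at least two extra edges), and also verify that $G/e$ still avoids the forbidden subdivisions before you apply minimality to it -- this needs an argument, e.g.\ $K_5\leq_t G/e\Rightarrow K_5\leq_m G$, and then Lemma~\ref{either} and Proposition~\ref{converse} give a forbidden subdivision in $G$. The genuine gap, however, is exactly the step you yourself flag as ``the main obstacle'': extracting an explicit $K_{3,3}$- or $K_5$-subdivision from the interleaving of the neighbours of $u$ and $v$ around $w$. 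That is not a routine detail to be waved at; it is the mathematical heart of the proof, and as written the argument is not complete without it. The usual way to discharge it is to work not with the abstract rotation at $w$ but with the cycle $C$ bounding the face of the plane graph $(G/e)-w$ that contained $w$ (this cycle exists because $(G/e)-w$ is $2$-connected), place the $C$-neighbours of $u$ and of $v$ on $C$, and then split into cases: if the two neighbour sets lie on complementary arcs of $C$ you can re-split $w$ and draw $G$, contradicting non-planarity; if they strictly interleave you identify six branch vertices ($u$, $v$, and four vertices of $C$) and three pairwise-crossing $C$-arcs to produce a $K_{3,3}$-subdivision; and if $u$ and $v$ share at least three neighbours on $C$ you instead produce a $K_5$-subdivision. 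Each case needs the branch vertices and internally disjoint connecting paths written down explicitly -- until you do that, the proof is a plausible plan rather than a proof.
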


\noindent For a proof of Kuratowski's Theorem, see Diestel (2000).

\noindent We are now in position to state and prove Wagner's Theorem from Kuratowski's Theorem.

\begin{thm}[Wagner's Theorem] \label{wagner}
A graph $G$ is planar if and only if $G$ contains neither $K_5$ nor $K_{3,3}$ as a minor.
\end{thm}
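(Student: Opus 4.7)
The plan is to derive Wagner's Theorem from Kuratowski's Theorem by showing that the conditions ``contains $K_5$ or $K_{3,3}$ as a subdivision'' and ``contains $K_5$ or $K_{3,3}$ as a minor'' are in fact equivalent, even though subdivisions and minors are not equivalent in general. Once we have this equivalence, Wagner is immediate from Kuratowski. I would prove each direction by contrapositive.

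For the easy direction, suppose $G$ contains neither $K_5$ nor $K_{3,3}$ as a minor, and we want to show $G$ is planar. Equivalently, suppose $G$ is not planar; then by Kuratowski (Theorem \ref{kuratowski}), $G$ contains a subdivision of $K_5$ or $K_{3,3}$. By Corollary \ref{implications} we have $H \leq_t G \Rightarrow H \leq_m G$, so $G$ contains $K_5$ or $K_{3,3}$ as a minor. This direction uses almost no work beyond what is already in the excerpt.

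For the hard direction, suppose $G$ contains $K_5$ or $K_{3,3}$ as a minor, and we want to deduce non-planarity. This is where the previous results were set up deliberately: if $K_{3,3} \leq_m G$, then since $\Delta(K_{3,3}) = 3$, Proposition \ref{converse} immediately gives $K_{3,3} \leq_t G$. If instead $K_5 \leq_m G$, then Proposition \ref{converse} does not directly apply because $\Delta(K_5) = 4$, but Lemma \ref{either} was specifically crafted for this case: it says that $K_5 \leq_m G$ implies that $G$ contains a subdivision of $K_{3,3}$ or of $K_5$. In either case, $G$ contains a subdivision of $K_5$ or $K_{3,3}$, so by Kuratowski $G$ is not planar.

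No step is really an obstacle here, because the main technical work has been absorbed into Lemma \ref{either} and Proposition \ref{converse}. The only thing to be careful about is noting why Proposition \ref{converse} cannot be invoked for $K_5$ directly, and explicitly pointing to Lemma \ref{either} as the substitute that handles the higher maximum degree. After that, the proof is just a short chain of implications in each direction, and the whole argument fits in a few lines.
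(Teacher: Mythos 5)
Your proposal is correct and takes essentially the same approach as the paper: both directions rely on Kuratowski's Theorem, with Lemma \ref{either} handling the $K_5$ case and Proposition \ref{converse} (via $\Delta(K_{3,3})=3$) handling the $K_{3,3}$ case. You simply phrase each direction by contrapositive, which streamlines the exposition slightly but uses the same chain of lemmas.
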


\begin{proof}
For the forward direction, suppose $G$ is planar. By Kuratowski's Theorem, $G$ does not contain a subdivision of $K_5$ or of $K_{3,3}$. By Lemma \ref{either}, $G$ cannot contain $K_5$ as a minor. Moreover, by Proposition \ref{converse}, $G$ cannot contain $K_{3,3}$ as a minor, otherwise $G$ would contain a subdivision of $K_{3,3}$, because  $\Delta(K_{3,3}) = 3$.
\\

\noindent For the backward direction, suppose $G$ contains neither $K_5$ nor $K_{3,3}$ as a minor. Then, whenever $H$ is a subgraph of $G$, we must have $K_5 \nleq_m H$ and $K_{3,3} \nleq_m H$. (If $K_5 \leq_m H$, then $K_5 \leq_m H \leq G$ would imply that $K_5 \leq_m G$, but this is forbidden. Similarly, we cannot have $K_{3,3} \leq_m H$.) By Corollary \ref{q1}, $H$ is neither a subdivision of $K_5$ nor of $K_{3,3}$.
We have shown that no subgraph $H$ of $G$ is a subdivision of $K_5$ or of $K_{3,3}$. Therefore by Kuratowski's Theorem, $G$ is planar.
\end{proof}
\section{The Chromatic Number and Forbidden Minors}

In this section we prove several results of the form: \textit{if the chromatic number of a graph $G$ is large enough, then this forces $G$ to contain a complete graph as its minor.}

\begin{defn}
Let $x \in V(G)$, and $U \subseteq V(G) \backslash \{x\}$. An \textbf{$x,U$-fan} is a set of paths from $x$ to $U$ that are pairwise disjoint except at $x$. The \textbf{size} of the fan is the number of paths in the collection.
\end{defn}

\begin{lem}[Fan Lemma] \label{fan}
If $G$ is $k$-connected, and the size of $U$ is at least $k$. Then there exists an $x,U$-fan of size $k$.
\end{lem}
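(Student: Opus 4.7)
The plan is to derive the Fan Lemma from Menger's theorem by a standard gluing trick. Let $G'$ be obtained from $G$ by adding a new vertex $u^*$ joined to every vertex of $U$. Any $x,u^*$-path in $G'$ must use some vertex of $U$ as its penultimate vertex, so if I can produce $k$ internally vertex-disjoint $x,u^*$-paths in $G'$ and then delete $u^*$ from each, I recover $k$ paths from $x$ to distinct vertices of $U$ in $G$ that are pairwise disjoint except at $x$, which is exactly an $x,U$-fan of size $k$.

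It therefore suffices to produce $k$ internally disjoint $x,u^*$-paths in $G'$. Since $x \notin U$, the vertices $x$ and $u^*$ are non-adjacent in $G'$, so by the vertex form of Menger's theorem it is enough to show that no set $S \subseteq V(G') \setminus \{x,u^*\} = V(G) \setminus \{x\}$ of size less than $k$ separates $x$ from $u^*$ in $G'$. Suppose $|S|<k$. If $U \subseteq S$, then $|S| \geq |U| \geq k$, a contradiction. Otherwise, pick any $u \in U \setminus S$; since $u \neq x$ and $G$ is $k$-connected, $G-S$ is connected, so it contains an $x,u$-path $P$. Concatenating $P$ with the edge $uu^*$ yields an $x,u^*$-path in $G'-S$, contradicting the assumption that $S$ separates $x$ from $u^*$.

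With $k$ internally disjoint $x,u^*$-paths $P_1,\ldots,P_k$ in $G'$ in hand, write each $P_i$ as $x,\ldots,u_i,u^*$ with $u_i \in U$. Internal disjointness forces the $u_i$ to be pairwise distinct (a repeated $u_i$ would be an internal vertex shared by two of the paths), and the truncated walks $P_i - u^*$ are then $x,U$-paths in $G$ that are pairwise disjoint except at $x$. These form the required fan.

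The only non-routine ingredient is Menger's theorem, which I am taking as standard background and hence the main obstacle is invoking it cleanly; the real conceptual content is the separation argument above, which packages the target set $U$ into a single auxiliary vertex $u^*$ and then observes that any small $x,u^*$-separator either swallows all of $U$ (forcing $|S|\geq k$ directly) or fails to disconnect $x$ from some surviving $u\in U$, contradicting the $k$-connectivity of $G$.
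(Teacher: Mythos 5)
Your proof is correct and follows essentially the same route as the paper: adjoin an auxiliary vertex joined to all of $U$, invoke the vertex form of Menger's theorem to obtain $k$ internally disjoint $x,u^*$-paths, and truncate them to get the fan. The only cosmetic difference is that the paper establishes $\kappa(G')\geq k$ globally before citing Menger, whereas you verify the local separation condition between $x$ and $u^*$ directly; both arguments are valid and rely on the same case split (a small separator either contains all of $U$ or misses some $u\in U$, contradicting $k$-connectivity of $G$).
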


\begin{proof}Add a new vertex $u$ to $G$, along with edges from $u$ to each vertex of $U$. This new graph $G'$ is again $k$-connected, since $v(G') \geq k+1$ and no set of size at most $k-1$ separates $G'$. Indeed, suppose there was such a set $S$.

\begin{itemize}
    \item If $S \subseteq V(G)$, then as $G'-S$ is disconnected, hence so is $G-S=G'-u-S = G'-S-u$. Indeed, $G'-S$ has a component $C$ containing $u$, and at least one other component $D$ not containing $u$, so $G'-S-u$ has at least two components, namely D and some component of $C-u$, since $C-u$ is nonempty: it contains some element of $U$, since the size of $S$ was less than $k$.
    \item If $u \in S$, then $G-S=G'-u-S =G'-S$ is disconnected.
\end{itemize}

\noindent Now, $G'$ is $k$-connected, so by Menger's Theorem, there exist $k$ independent paths $P_1, \ldots, P_k$ from $x$ to $u$ in $G'$. (Menger's Theorem says there exist $K_{G'}(x,u)$ independent paths from $x$ to its non-neighbour $u$, but $K_{G'}(x,u) \geq \kappa(G') \geq k$, where the first inequality is because no set of size less than $\kappa(G')$ separates $x$ from $u$.) Each such path is necessarily of the form $x\ldots u_i u $ for some $u_i \in U$, where $u_i \neq u_j$ if $i \neq j$. Therefore the paths $x\ldots u_i$ in $G$ form our desired $x, U$-fan of size $k$.
\end{proof}

\begin{lem} \label{oddcyclesminor}
If a graph $G$ has a cycle, then $G$ contains a $K_3$-minor.
\end{lem}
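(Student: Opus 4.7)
The plan is to use the alternative characterisation of minors (Proposition \ref{minoralt}): to show $K_3 \leq_m G$, it suffices to exhibit three disjoint nonempty subsets $V_1, V_2, V_3 \subseteq V(G)$ such that each $G[V_i]$ is connected and $e(V_i, V_j) > 0$ for all distinct $i, j \in \{1,2,3\}$.

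First I would take any cycle $C = v_1 v_2 \cdots v_k v_1$ in $G$ (guaranteed by hypothesis). If $k = 3$, then $\{v_1\}, \{v_2\}, \{v_3\}$ already witness $K_3$ as a subgraph of $G$, hence as a minor via Corollary \ref{implications}. If $k \geq 4$, I would set
\[
V_1 = \{v_1\}, \quad V_2 = \{v_2\}, \quad V_3 = \{v_3, v_4, \ldots, v_k\}.
\]
These are disjoint and nonempty. The subgraphs $G[V_1]$ and $G[V_2]$ are single vertices, hence connected, and $G[V_3]$ contains the path $v_3 v_4 \cdots v_k$, hence is connected. The cycle edges $v_1v_2$, $v_2v_3$, and $v_kv_1$ give $e(V_1,V_2), e(V_2,V_3), e(V_1,V_3) > 0$ respectively, verifying the conditions of Proposition \ref{minoralt}.

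There is essentially no obstacle here: the result follows by picking a cycle and collapsing all but two of its vertices into one ``branch set''. An equivalent phrasing would be to perform $k-3$ successive edge contractions along the cycle, each of which preserves the cyclic structure, and observe that the resulting $3$-vertex graph contains $K_3$ as a subgraph; then Lemma \ref{trans} (combined with the fact that subgraphs are minors) finishes the argument. I would present the partition version for brevity.
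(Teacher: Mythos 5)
Your proof is correct. Your primary argument uses the branch-set characterisation (Proposition \ref{minoralt}): exhibit $V_1 = \{v_1\}$, $V_2 = \{v_2\}$, $V_3 = \{v_3,\ldots,v_k\}$ and check the three conditions directly. The paper instead stays with the sequence-of-operations definition: it first uses Corollary \ref{implications} to pass from the subgraph $C_n$ to the minor $C_n$, then contracts one edge at a time along the sequence $C_n, C_{n-1}, \ldots, C_3$, and finishes by transitivity (Lemma \ref{trans}). These are two equivalent routes to the same fact, and you explicitly flag the paper's route as your ``equivalent phrasing'' at the end. The branch-set version is arguably more self-contained (one application of Proposition \ref{minoralt}), while the paper's version is slightly more in the spirit of Definition \ref{minor} and showcases the transitivity lemma; neither is longer or harder. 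One very small stylistic point: your $k=3$ case does not actually need separate treatment, since the partition $\{v_1\},\{v_2\},\{v_3\}$ is just the $k\geq 3$ case of your general construction with $V_3$ a singleton, so you could drop the case split entirely.
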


\begin{proof}
Suppose $G$ has a cycle $C_n$ $(n \geq 3)$ as a subgraph. By Corollary \ref{implications}, $G$ has a $C_n$-minor. Now note that $C_n$ in turn has a $C_3$-minor, as witnessed by the sequence $C_n,C_{n-1}, \ldots, C_3$, each obtained from the previous by contracting an edge. Hence, Lemma \ref{trans} implies that $K_3 \cong C_3$ is a minor of $G$.
\end{proof}

\begin{prop}
If a graph $G$ has $\chi (G) \geq 3$ then $G$ contains a $K_3$-minor.
\end{prop}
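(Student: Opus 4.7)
The plan is to reduce this immediately to Lemma \ref{oddcyclesminor}, which already tells us that any graph containing a cycle has a $K_3$-minor. So the only thing that needs to be verified is that a graph with chromatic number at least $3$ must contain a cycle.

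To establish this, I would prove the contrapositive: if $G$ contains no cycle, then $\chi(G) \leq 2$. A graph with no cycle is a forest, i.e., a disjoint union of trees. It is a standard fact (easily shown by induction on the number of vertices, by removing a leaf, properly $2$-colouring the rest, and then colouring the leaf with the opposite colour to its unique neighbour) that every tree is $2$-colourable. Since $G$ is a disjoint union of trees, we may $2$-colour each component independently to obtain a proper $2$-colouring of $G$, giving $\chi(G) \leq 2$.

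So, if $\chi(G) \geq 3$, then $G$ cannot be a forest and must contain a cycle. Applying Lemma \ref{oddcyclesminor} then yields $K_3 \leq_m G$, completing the proof. There is no real obstacle here, since the only ingredients are the elementary fact that forests are bipartite and the prior lemma that a cycle gives rise to a $K_3$-minor via repeated edge contractions.
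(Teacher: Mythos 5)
Your proof is correct and follows the same overall strategy as the paper's: reduce the problem to exhibiting a cycle in $G$, then invoke Lemma \ref{oddcyclesminor}. The intermediate step is slightly different, though. The paper uses the standard characterisation that a graph is $2$-colourable if and only if it contains no odd cycle, and so extracts an odd cycle from the hypothesis $\chi(G) \geq 3$. You instead argue that an acyclic graph is a forest and that forests are $2$-colourable (by the usual leaf-removal induction), which is a weaker and more elementary fact -- you never need the full bipartite/odd-cycle equivalence. Both routes land in the same place because Lemma \ref{oddcyclesminor} only requires a cycle of some length, not an odd one; in that sense the paper's argument proves slightly more than it uses, while yours is tailored to exactly what the lemma needs.
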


\begin{proof}
This is straightforward, by noting some equivalent characterisations of bipartite graphs: a graph $G$ is bipartite if and only if it is $2$-colourable, if and only if it contains no odd cycles. Hence, $\chi (G) \geq 3$ means $G$ is not $2$-colourable, therefore $G$ is not bipartite, therefore $G$ has an odd cycle, therefore $G$ contains a $K_3$-minor by Lemma \ref{oddcyclesminor}.
\end{proof}

\begin{prop}
If a graph $G$ has $\chi (G) \geq 4$ then $G$ contains a $K_4$-minor.
\end{prop}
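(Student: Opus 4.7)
The plan is to take a vertex-minimal counterexample $G$---a graph with $\chi(G) \geq 4$ having no $K_4$-minor, on the fewest vertices---and derive a contradiction by first establishing that $G$ must be $3$-connected, then exhibiting a $K_4$-subdivision in such a $G$ via the Fan Lemma (yielding $K_4 \leq_m G$ by Corollary \ref{implications}).

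By minimality, every graph on fewer vertices with $\chi \geq 4$ contains a $K_4$-minor, so in particular every proper subgraph of $G$ is $3$-colorable. This first forces $\delta(G) \geq 3$: a vertex of degree at most $2$ could be added back to a $3$-coloring of $G - v$ using a free color. Next, $G$ has no cut vertex $c$: decomposing $G = G_A \cup G_B$ with $V(G_A) \cap V(G_B) = \{c\}$ into two proper subgraphs, we $3$-color each and permute colors in one to agree at $c$, giving a $3$-coloring of $G$. Finally, $G$ has no $2$-cut $\{u, v\}$: write $G = G_A \cup G_B$ with $V(G_A) \cap V(G_B) = \{u, v\}$, and let $G'_A, G'_B$ be the pieces with the edge $uv$ added if absent. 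Any $K_4$-minor in $G'_A$ would lift to one in $G$ by rerouting the added edge through a $u,v$-path in $G_B$, appending its interior to the branch set containing $u$; hence, by minimality (both $G'_A, G'_B$ have strictly fewer vertices than $G$), both are $3$-colorable. Because $uv$ is an edge in each, $u$ and $v$ receive distinct colors in both colorings, and we can permute colors in one to align with the other at $\{u,v\}$, yielding a $3$-coloring of $G$---contradicting $\chi(G) \geq 4$. So $G$ is $3$-connected.

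To find the $K_4$-subdivision, pick any $v \in V(G)$; then $G - v$ is $2$-connected and contains a cycle $C$ with $|V(C)| \geq 3$. Applying the Fan Lemma (Lemma \ref{fan}) to $G$ with $x = v$ and $U = V(C)$ produces a $v, V(C)$-fan of size $3$. I would truncate each fan path at its first intersection with $V(C)$, obtaining internally disjoint paths $P_1, P_2, P_3$ in $G$ from $v$ to \emph{distinct} vertices $u_1, u_2, u_3 \in V(C)$ whose interiors avoid $V(C)$. Together with the three arcs of $C$ cut out by $u_1, u_2, u_3$, these six pairwise internally disjoint paths form a $K_4$-subdivision with branch vertices $v, u_1, u_2, u_3$, giving $K_4 \leq_t G$, hence $K_4 \leq_m G$, and the desired contradiction.

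I expect the main obstacle to be the $2$-cut case in the structural reduction: one must carefully verify both directions---that a $K_4$-minor in the augmented piece $G'_A$ genuinely lifts to a $K_4$-minor in $G$ by routing the added edge $uv$ through the other side, and that $3$-colorings of $G'_A$ and $G'_B$ can be permuted so as to glue into a $3$-coloring of $G$.
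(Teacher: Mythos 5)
Your proof is correct and follows essentially the same route as the paper: reduce to the case where $G$ has no cut of size $\le 2$ (the paper phrases this as induction on $v(G)$ with a case split on $\kappa(G)$, you as a minimal counterexample, but these are the same thing), then in the $3$-connected case apply the Fan Lemma to a vertex $v$ and a cycle in $G-v$ to extract a $K_4$. Your version is a touch more careful in the final step---explicitly truncating the fan paths at their first meeting with the cycle and building a genuine $K_4$-subdivision---where the paper somewhat informally lists singleton branch sets and says ``contract the paths.''
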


\begin{proof}
Let us proceed by induction on $v(G)$. If $v(G) \leq 3$, there is nothing to show; if $v(G)=4$ and $\chi(G) \geq 4$ then $\chi(G) = 4$, so $G$ must be $K_4$. Now suppose $v(G)>4$, and that the result holds for all graphs on fewer vertices. Without loss, we may assume $G$ is connected. (Indeed, $\chi(G)$ is the maximum of the chromatic numbers of the components of $G$, so $G$ has a component $C$ with $\chi(C) \geq 4$. If $G$ is not connected then $C$ is a proper subgraph of $G$, so by induction $C$ -- and hence $G$ -- must have a $K_4$-minor.) Now we split into four cases.
\begin{itemize}

    \item Suppose $\kappa(G)=0$. $G$ is not $1$-connected, so as $v(G) \geq \chi(G) \geq 4 \geq 2$, we have that $G$ is disconnected. This contradicts our assumption.
    
    \item Suppose $\kappa(G)=1$. $G$ is not $2$-connected, so as $v(G) \geq \chi(G) \geq 4 \geq 
    3$, we have that some set $\{c\} \subseteq V(G)$ separates $G$. $G-c$ will have a partition into two disjoint subgraphs $G'_1$ and $G'_2$ (these will be unions of the components of $G-c$). Let $G_i=G[V(G'_i) \cup \{c\}]$, for $i=1,2$. Then $V(G_1) \cap V(G_2)=\{c\}$, and $G_1[\{c\}]=G_2[\{c\}]$ are complete (they are both equal to $(\{c\},\emptyset) \cong K_1)$, so $\chi(G)=\chi(G_1 \cup G_2)=\text{max}\{\chi(G_1),\chi(G_2)\}$. Without loss, $\chi(G_1)=\chi(G) \geq 4$. $G_1$ is a proper subgraph of $G$, as $\emptyset \neq V(G'_2) \subseteq V(G) \backslash V(G_1)$), so by induction $G_1$ (and hence $G$) must have a $K_4$-minor.
    
    \item Suppose $\kappa(G)=2$, and $\{x,y\}$ is a separating set: there are subgraphs $G_1$ and $G_2$ such that $G=G_1 \cup G_2$, $V(G_1)\cap V(G_2)=\{x,y\}$, $e(V(G_1)\backslash \{x,y\},V(G_2)\backslash \{x,y\})=0$ and $V(G_i)\backslash \{x,y\} \neq \emptyset$ for both $i$. Suppose for a contradiction that both $G_1+xy$ and $G_2+xy$ can be $3$-coloured. Both colourings must colour $x$ and $y$ with different colours, so after possibly permuting one of the colourings, we can take their union to get a $3$-colouring on $G$; this is a contradiction. Hence, assume without loss that $\chi(G_1+xy) \geq 4$. $G_1+xy$ has fewer vertices than $G$ (as $V(G_2)\backslash \{x,y\} \neq \emptyset$), so by induction it has a $K_4$-minor. We are not quite done yet, as $xy$ need not be an edge of $G$. If it is, we are done, so suppose not. Note that there is a path from $x$ to $y$ in $G_2$ -- without loss $x$ has a neighbour $z$ in $G_2$, but $x$ cannot be a cutvertex, so removing it shows that there is a path from $z$ to $y$ that does not use $x$; concatenating $xz$ with a minimal such path produces a path $P_{xy}$ from $x$ to $y$ in $G_2$. This path 'substitutes' for the missing edge $xy$; we can just contract it and pretend as though we have the edge $xy$ in our graph. In other words, $K_4 \leq_m G_1+xy \leq_m G_1 \cup P_{xy} \leq G$ shows that $G$ has a $K_4$-minor.

    \item Finally, suppose $\kappa(G)\geq 3$. Let $v \in V(G)$. Then $\kappa(G-v) \geq \kappa(G)-1 \geq 2$, so $G-v$ is $2$-connected, and in particular contains a cycle. (Otherwise $G-v$ is connected and acyclic, i.e., a tree. It has a leaf $l$; if we delete the unique neighbour $n$ of $l$ from this tree we get a disconnected graph, because $n$ is not another leaf, since $G-v$ has at least $3$ vertices. This contradicts that $G-v$ is $2$-connected.) Take $U$ to be the set of vertices of this cycle, so its size is at least $3$. Since $G$ is $3$-connected, by Lemma \ref{fan} there exist three paths $P_1,P_2,P_3$ from $v$ to this cycle, that are disjoint except at $v$. Let $v_1,v_2,v_3$ be the three distinct end-vertices of the $P_i$. Then the sets $\{v\}, \{v_1\},\{v_2\},\{v_3\}$ witness that $G$ has a $K_4$-minor, since we can contract the cycle onto the three vertices $v_1,v_2,v_3$, and contract the paths $P_i$ to edges -- in the resulting graph, $x$ is neighbours with each of the three vertices in a triangle. This is a copy of $K_4$.

\end{itemize}
\end{proof}

\noindent In light of the previous two propositions, we might conjecture:

\begin{conjecture}\label{hadwiger}
For every $k \geq 2$, if $\chi (G) \geq k$ then $G$ contains a $K_k$-minor.
\end{conjecture}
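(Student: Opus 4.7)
The plan is to attempt induction on $k$, extending the proofs of the $k=3$ and $k=4$ cases given above. The base cases are in hand: $k=2$ is immediate (if $\chi(G) \geq 2$ then $G$ has an edge, hence a $K_2$-minor), and $k=3,4$ are the preceding propositions. The inductive step would seek to show that if every graph $H$ with $\chi(H) \geq k$ has a $K_k$-minor, then every graph $G$ with $\chi(G) \geq k+1$ has a $K_{k+1}$-minor.

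Following the structural pattern of the $k=4$ proof, the first move is to pass to a vertex-critical counterexample, so that every proper subgraph is $k$-colourable and hence $\delta(G) \geq k$. The connectivity case split from the $k=4$ proof should generalise: if $\kappa(G) \leq k-1$, one splits $G$ at a minimum cut, adds the ``missing edges'' between cut vertices if needed, applies the inductive hypothesis to a piece whose chromatic number is at least $k+1$, and reconnects to $G$ by contracting paths in the other piece. This should reduce the problem to the case where $G$ is at least $k$-connected. Then the idea is to pick a vertex $v$, apply the inductive hypothesis inside $G-v$ (which has chromatic number at least $k$) to produce a $K_k$-minor with branch sets $V_1, \ldots, V_k$, and use the Fan Lemma (Lemma \ref{fan}) to attach $v$ via $k$ internally disjoint paths to the $k$ branch sets, giving a $K_{k+1}$-minor.

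The hard part, however, is essentially every step: this statement is \emph{Hadwiger's Conjecture}, one of the most celebrated open problems in graph theory. Wagner (1937) showed the case $k=5$ is equivalent to the Four Colour Theorem; Robertson, Seymour, and Thomas (1993) proved $k=6$, again by reduction to the Four Colour Theorem. For $k \geq 7$ the conjecture remains open. The naive induction sketched above breaks down at the transition between $G-v$ and $G$: the Fan Lemma requires genuine $k$-connectivity, which is not implied by $\chi(G) \geq k+1$ alone, and even when a fan is available there is no reason the $k$ endpoints in $G-v$ should land one in each branch set $V_i$ of the inductively-obtained $K_k$-minor. Controlling this interaction between chromatic number and minor-containment is what makes the conjecture so stubborn; the strongest known general bounds only show that $\chi(G) = O(k \sqrt{\log k})$ (Kostochka, Thomason), later improved to $O(k \log \log k)$ (Delcourt--Postle and others), forces a $K_k$-minor, which is still exponentially far from the conjectured linear bound.
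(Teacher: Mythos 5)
You have correctly identified that this statement is Hadwiger's Conjecture, which is an open problem, and the paper does not prove it either --- it is stated explicitly as a \emph{conjecture}, and the paper instead proves the much weaker Theorem~\ref{restate} (replacing the hypothesis $\chi(G) \geq k$ by $\chi(G) \geq 2^k$). Your diagnosis of exactly where the naive induction fails is accurate and well put: the connectivity reduction does not deliver genuine $k$-connectivity from $\chi(G) \geq k+1$ alone, and even with a fan in hand there is no control over which branch sets of the inductively-obtained $K_k$-minor the fan's endpoints land in. One small correction to your historical remarks: the $k=5$ case is not \emph{equivalent} to the Four Colour Theorem but is \emph{implied} by it (Wagner's 1937 reduction shows the Four Colour Theorem implies the $k=5$ case; the converse implication is a separate, easier observation, and the two are not literally the same statement), and the same asymmetry holds for the $k=6$ result of Robertson, Seymour and Thomas. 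Since the statement is genuinely open, the correct response --- which you essentially gave --- is to refuse to supply a proof and to explain why the obvious attack fails; you might also note, as the paper does in its conclusion and you do at the end, that the best one can currently do unconditionally is a bound of the form $\chi(G) = O(k\sqrt{\log k})$ (Kostochka, Thomason) forcing a $K_k$-minor, with recent improvements toward $O(k \log\log k)$, and that the paper's own exponential bound $2^k$ is a pedagogical warm-up rather than a serious attack on the conjecture.
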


\noindent This is precisely the famous \textit{Hadwiger's Conjecture}.
\\
Let us prove a weaker statement:

\begin{thm}
For every $k \geq 2$, there exists a constant $c(k)$ such that if $\chi (G) \geq c(k)$ then $G$ contains a $K_k$-minor.
\end{thm}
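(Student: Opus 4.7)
The plan is to decouple the problem into two parts: first reduce the chromatic hypothesis to a minimum-degree hypothesis, and then prove a purely extremal statement that sufficiently large minimum degree forces a $K_k$-minor. For the reduction, I observe that if $\chi(G) \geq c$, then $G$ must contain a nonempty subgraph $H$ with $\delta(H) \geq c-1$. This is by the usual peeling argument: repeatedly delete any vertex of degree less than $c-1$; if the process ever strips $G$ bare, reversing the deletion order exhibits a greedy proper $(c-1)$-coloring, contradicting $\chi(G) \geq c$. So a surviving subgraph $H$ has $\delta(H) \geq c-1$, and any $K_k$-minor of $H$ is a $K_k$-minor of $G$ by Corollary~\ref{implications} and Lemma~\ref{trans}.

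It therefore suffices to prove: for every $k \geq 2$ there exists $d(k)$ such that $\delta(G) \geq d(k)$ implies $K_k \leq_m G$. Setting $c(k) = d(k) + 1$ finishes the theorem. I would prove this by induction on $k$. The base cases are immediate: $d(2)=1$, since any edge is a $K_2$; and $d(3)=2$, since a graph of minimum degree at least $2$ contains a cycle, and Lemma~\ref{oddcyclesminor} then produces a $K_3$-minor.

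For the inductive step, assume $d(k-1)$ exists; the goal is to exhibit $d(k)$. The strategy is contraction: given $G$ with $\delta(G) \geq d(k)$, I would locate a connected subgraph $B \subseteq V(G)$ such that, after contracting $B$ to a single vertex $b$, the subgraph induced on $N(b)$ in the contracted graph has minimum degree at least $d(k-1)$. Applying the inductive hypothesis to that subgraph yields a $K_{k-1}$-minor, and adjoining $B$ (whose vertex set is adjacent to every vertex of $N(b)$) promotes this to a $K_k$-minor. The existence of such a $B$ can be extracted via a Mader-style argument: select $B$ to be a minimal connected subgraph for which $|N(B)|$ is appropriately large, then exploit minimality together with a counting argument on the $B$-to-$N(B)$ cut to force enough edges inside $N(B)$ to meet the threshold $d(k-1)$.

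The main obstacle will be the inductive step, specifically finding $B$ and quantifying a clean recurrence for $d(k)$ in terms of $d(k-1)$. Arbitrary connected subgraphs need not behave well under contraction — the neighborhood of the contracted vertex can be sparse — so the extraction of $B$ must be guided by an extremal principle, for instance maximizing $\epsilon(G) := e(G)/v(G)$ over minor-closed subgraphs, or choosing $B$ to minimize $|B|$ subject to a cut-size lower bound. Once such $B$ is produced, a routine averaging argument gives a recurrence of the shape $d(k) \leq 2\, d(k-1) + O(1)$, so any explicit bound such as $c(k) = 2^{k-1} + 1$ will suffice. All remaining work is bookkeeping to verify that the contracted subgraph really does satisfy the inductive hypothesis and that the resulting branch sets assemble into a valid $K_k$-minor via Proposition~\ref{minoralt}.
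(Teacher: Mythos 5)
Your route is genuinely different from the paper's: you reduce the chromatic hypothesis to a minimum-degree hypothesis (correct --- if $\chi(G)\geq c$ then some subgraph $H$ has $\delta(H)\geq c-1$ by the standard peeling/greedy-colouring argument, and any $K_k$-minor of $H$ is one of $G$), and then invoke Mader's theorem that large degree forces a $K_k$-minor. The paper instead stays entirely in colouring language: it fixes a root $x$, partitions $V(G)$ into distance layers $S_d(x)$, finds a layer with $\chi(G[S_d(x)])\geq\chi(G)/2$ (Proposition~\ref{maxcol}), obtains a $K_{k-1}$-minor there by induction, and adjoins the ball $\bigcup_{b<d}S_b(x)$ as one new connected branch set adjacent (via Lemma~\ref{layers}) to every vertex of the layer. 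Both routes give $c(k)=O(2^k)$; yours, if completed, actually proves the stronger statement that density alone forces the minor, which is what underlies the sharper Kostochka-type bounds cited in the paper's conclusion.

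The gap is in the inductive step of the degree statement, which you flag as the obstacle but do not carry out, and the extraction you propose does not work as written. Choosing $B$ to be a minimal connected subgraph with $|N(B)|$ ``appropriately large'' fails at the start: any single vertex $v$ is already minimal connected with $|N(B)|=d(v)\geq\delta(G)$, and $G[N(v)]$ can be edgeless (e.g.\ $G$ bipartite with large minimum degree), so no count of $B$-to-$N(B)$ edges will manufacture edges \emph{inside} $N(B)$. The quantity to minimise is the edge density of a \emph{minor}, not the neighbourhood size of a connected subgraph. The correct version (Mader): take a minor $H\leq_m G$ that is minimal subject to $e(H)\geq 2^{k-3}v(H)$. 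For any edge $xy$ of $H$, contracting it gives $e(H/xy)=e(H)-1-|N_H(x)\cap N_H(y)|$, and minimality forces $e(H/xy)<2^{k-3}(v(H)-1)$, whence $|N_H(x)\cap N_H(y)|\geq 2^{k-3}$. Thus for any non-isolated $x$ the graph $H[N_H(x)]$ has minimum degree at least $2^{k-3}$, the induction applies there to produce a $K_{k-1}$-minor, and adding the branch set of $x$ (which meets every other branch set since $x$ is adjacent to all of $N_H(x)$) yields a $K_k$-minor via Proposition~\ref{minoralt}. With that replacement your plan is sound and yields a bound of the same order as the paper's $2^k$.
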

\begin{proof}
See proof of the restated version, Theorem \ref{restate}.
\end{proof}

\begin{notation}
For a graph $G$, write $d_G(x,y)$ for the length of a shortest path between vertices $x$ and $y$ (if it exists). This is the \textbf{distance} from $x$ to $y$. Let $G$ be a connected graph (so the distance between any two vertices is well-defined), and $x$ be a vertex of $G$. Define $S_d(x)= \{y \in $V(G)$: d_G(x,y)=d\}$ for each $d \geq 0$. For instance, $S_0(x)=\{x\}$.
\end{notation}

\noindent The idea is that if we fix a vertex $x$, then the sets $S_d(x)$ partition $V(G)$ into 'layers' around $x$, where each vertex $y$ in an outer layer has some edge entering the layer below it, and there are no edges between any two layers that are not adjacent. Let us state this formally.

\begin{lem}\label{layers}
Let $G$ be a connected graph, and $x$ a vertex of $G$. If $d > 0$ and $y \in S_d(x)$, then $\exists z \in S_{d-1}(x)$ such that $zy \in E(G)$. Furthermore, whenever $0 \leq c < d-1$, we have $e(S_{c}(x),S_{d}(x))= 0$.
\end{lem}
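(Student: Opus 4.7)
The plan is to exploit the basic fact that for any edge $uv$ in a graph and any vertex $x$, the distances $d_G(x,u)$ and $d_G(x,v)$ differ by at most $1$. This follows immediately since a shortest path from $x$ to $u$ can be extended by the edge $uv$ to obtain a walk of length $d_G(x,u)+1$ from $x$ to $v$, forcing $d_G(x,v) \leq d_G(x,u)+1$, and symmetrically in the other direction. This single observation drives both claims.

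For the first claim, I would take $y \in S_d(x)$ with $d > 0$, and consider a shortest path $P = x = y_0, y_1, \ldots, y_d = y$ from $x$ to $y$, which exists since $G$ is connected and $d_G(x,y) = d$. Let $z = y_{d-1}$. The initial segment $y_0, \ldots, y_{d-1}$ is a walk of length $d-1$ from $x$ to $z$, so $d_G(x,z) \leq d-1$. On the other hand, if $d_G(x,z) < d-1$, then concatenating a shortest $x$-to-$z$ path with the edge $zy$ would yield a walk from $x$ to $y$ of length strictly less than $d$, contradicting $y \in S_d(x)$. Hence $d_G(x,z) = d-1$, i.e. $z \in S_{d-1}(x)$, and $zy \in E(G)$ by construction of $P$.

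For the second claim, I would argue by contradiction: suppose $0 \leq c < d-1$, $w \in S_c(x)$, $y \in S_d(x)$, and $wy \in E(G)$. Then a shortest path from $x$ to $w$ (of length $c$) followed by the edge $wy$ gives a walk of length $c+1$ from $x$ to $y$. Since $c + 1 < d = d_G(x,y)$, this contradicts the minimality of $d_G(x,y)$. Hence no such edge exists, and $e(S_c(x), S_d(x)) = 0$.

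There is no real obstacle here; the lemma is essentially the standard BFS-layer decomposition, and the two claims are immediate consequences of the triangle inequality for graph distance applied to a single edge. The only thing to be careful about is verifying that $G$ being connected guarantees $d_G(x,y)$ is well-defined (already noted in the notation), and that the penultimate vertex on a shortest path really does lie in $S_{d-1}(x)$ rather than some earlier layer.
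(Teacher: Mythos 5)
Your proof is correct and follows essentially the same approach as the paper: take the penultimate vertex on a shortest $x$--$y$ path, show it lies in $S_{d-1}(x)$ by a concatenation-gives-a-shorter-path contradiction, and for the second claim derive the same kind of contradiction from a hypothetical edge between non-adjacent layers. The only cosmetic difference is that you preface the argument with the observation that graph distances across an edge differ by at most one, but the proof steps themselves are identical to the paper's.
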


\begin{proof}
For the first claim, note that $y \in S_d(x)$ implies there is a path of length $d$ from $x$ to $y$. Let this path be $v_0\ldots v_{d}$ with $v_0=x$, $v_d=y$. Consider the path $v_0\ldots v_{d-1}$ in $G$ from $x$ to $v_{d-1}$, of length $d-1$. This must be a shortest path from $x$ to $v_{d-1}$, for if there were a path of length less than length $d-1$ from $x$ to $v_{d-1}$, then concatenating it with the edge $v_{d-1} v_d$ would result in a path of length less than $d$ from $x$ to $y$, then we would have $d_G(x,y)<d$, contradicting $y \in S_d(x)$. Therefore $v_{d-1} \in S_{d-1}(x)$. Since $v_{d-1}v_d \in E(G)$, this proves the first claim.
\\
For the second claim, suppose for a contradiction that there exists some $0 \leq c < d-1$ and some $w \in S_{c}(x)$, $y \in S_{d}(x)$ such that $wy \in E(G)$. Let $w_0 \ldots w_c$ be a shortest path from $x$ to $w$, where $w_0=x, w_c=w$. Then, concatenating with the edge $w_c y$, we have a path $w_0 \ldots w_c y$ from $x$ to $y$ of length $c+1 < d $. This shows that $d_G(x,y)<d$, contradicting $y \in S_{d}(x)$.
\end{proof}

\begin{lem}
Let $G$ be a connected graph, and $x$ a vertex of $G$. Then there is some $D \geq 0$ such that $S_d(x)$ is empty whenever $d > D$, and nonempty whenever $0 \leq d \leq D$.
\end{lem}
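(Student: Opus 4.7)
The plan is to define $D$ as the maximum distance from $x$ to any vertex, and then peel the layers downward to fill in the intermediate ones. Since $G$ is finite and connected, the set $\{d_G(x,y) : y \in V(G)\}$ is a finite nonempty set of non-negative integers, so it has a maximum; let $D$ be this maximum. Explicitly, $D = \max_{y \in V(G)} d_G(x,y)$, and this is well-defined and finite.

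First I would handle the easy direction: if $d > D$, then by definition of $D$ no vertex $y$ satisfies $d_G(x,y) = d$, so $S_d(x) = \emptyset$. Also, $S_D(x)$ is nonempty since $D$ is actually attained by some vertex $y_0$.

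Next I would argue that $S_d(x)$ is nonempty for each $0 \leq d \leq D$ by downward induction on $d$, starting from $d = D$. The base case $d = D$ is handled above, and $S_0(x) = \{x\}$ is trivially nonempty. For the inductive step, suppose $S_d(x)$ is nonempty with $1 \leq d \leq D$; pick $y \in S_d(x)$. By the first claim of Lemma \ref{layers}, there exists $z \in S_{d-1}(x)$ with $zy \in E(G)$, so $S_{d-1}(x)$ is nonempty. Iterating gives that $S_d(x)$ is nonempty for every $0 \leq d \leq D$.

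There is no real obstacle here: the only ingredient beyond basic finiteness is the downward propagation guaranteed by Lemma \ref{layers}, which ensures that having a vertex at distance $d$ forces a vertex at distance $d-1$ (provided $d \geq 1$). In particular the layers around $x$ form a contiguous stack $S_0(x), S_1(x), \ldots, S_D(x)$ with no gaps, which is exactly what the statement asserts.
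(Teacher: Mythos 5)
Your proof is correct and takes essentially the same approach as the paper: define $D$ as the maximum distance from $x$ (exploiting finiteness and connectedness), note emptiness above $D$ is immediate, and propagate nonemptiness downward using the first part of Lemma \ref{layers}. The paper phrases the downward step as a proof by contradiction with a maximal empty index, whereas you phrase it as downward induction, but these are the same argument.
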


\begin{proof}
First observe that $S_d(x)$ is always empty for large enough $d$. For instance, take $D'$ to be the length of a longest path in $G$, then $S_d(x)$ is empty whenever $d > D'$. Then $S_d(x)$ can only be nonempty for a finite number of values $d=0,1,\ldots, D'$. Let $D$ be the largest among these such that $S_D(x)$ is nonempty. Then, for all $0 \leq d \leq D$, $S_d(x)$ will also be nonempty. Indeed, suppose not, then there is $d \in \{0,\ldots, D-1\}$ such that $S_d(x) = \emptyset$. Let us assume $d$ is maximal as such, so $\exists y \in S_{d+1}(x)$. The lemma above implies, in particular, that there is some $z \in S_{d}(x)$. This contradicts $S_d(x) = \emptyset$. Therefore we must indeed conclude that for all $0 \leq d \leq D$, $S_d(x)$ is nonempty.
\end{proof}

\begin{prop} \label{maxcol}
Let $G$ be a connected graph, and $x$ a vertex of $G$. Then there is some $d \geq 0$ such that $\chi (G[S_d(x)]) \geq \chi (G) /2$.
\end{prop}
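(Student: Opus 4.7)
The plan is to proceed by contradiction. Suppose that $\chi(G[S_d(x)]) < \chi(G)/2$ for every $d \geq 0$, and set $k = \max_d \chi(G[S_d(x)])$; this maximum exists because by the preceding lemma the layers $S_d(x)$ are empty for $d > D$. By assumption $2k < \chi(G)$, so to obtain a contradiction it suffices to exhibit a proper colouring of $G$ using only $2k$ colours.

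To build this colouring, I would use Lemma \ref{layers}, which says that the only edges of $G$ lie either within a single layer $S_d(x)$ or between two consecutive layers $S_d(x)$ and $S_{d+1}(x)$. This suggests splitting a palette of $2k$ colours into two disjoint blocks $A = \{1,\ldots,k\}$ and $B = \{k+1,\ldots,2k\}$, and colouring each $G[S_d(x)]$ with colours from $A$ when $d$ is even and from $B$ when $d$ is odd. Since $\chi(G[S_d(x)]) \leq k$ for every $d$, each layer can indeed be properly coloured using only $k$ colours from the appropriate block.

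Next I would check that this is a proper colouring of all of $G$. Any edge $uv \in E(G)$ lies in one of exactly two cases by Lemma \ref{layers}: either $u,v$ are in the same layer $S_d(x)$, in which case they receive distinct colours because the chosen colouring of $G[S_d(x)]$ is proper; or $u \in S_d(x)$ and $v \in S_{d+1}(x)$, in which case the two layers receive colours from the disjoint blocks $A$ and $B$, so $u$ and $v$ automatically get different colours. Thus we obtain a proper $2k$-colouring of $G$, contradicting $\chi(G) > 2k$.

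The argument is essentially complete once this construction is laid out; the only place where care is needed is verifying that the strict inequality $\chi(G[S_d(x)]) < \chi(G)/2$ really forces $2k < \chi(G)$ in both the even and odd cases of $\chi(G)$, but since $k$ is an integer, $k < \chi(G)/2$ gives $2k \leq \chi(G) - 1 < \chi(G)$ regardless of parity. The only genuine content of the proof is the layer-by-layer two-palette colouring, which is really just the observation that the layered structure makes the ``quotient'' graph on the layers a path, and any graph mapping to a path can be $2$-coloured at the layer level.
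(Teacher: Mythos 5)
Your proposal is correct and takes essentially the same approach as the paper: pick a layer whose induced subgraph has maximal chromatic number, then colour $G$ using two disjoint palettes alternating between consecutive layers, justified by the fact (Lemma \ref{layers}) that edges only occur within a layer or between consecutive layers. The paper presents this as a direct proof that $G$ is $2\chi$-colourable rather than as a contradiction, but the construction and its verification are the same.
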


\begin{proof}
Let $D$ be as in the lemma above. Choose $d \in \{0,\ldots, D\}$ which maximizes $\chi (G[S_d(x)])$. Let $\chi =\chi (G[S_d(x)])$. It is enough to show that $\chi \geq \chi (G) /2$, or equivalently, that $G$ is $2\chi$-colourable. Let us $2\chi$-colour $G$ starting with the outermost layer $G[S_D(x)]$ and working inwards. By the maximality assumption, we can certainly (properly) $\chi$-colour each layer $G[S_b(x)]$ (where $0 \leq b \leq D$). Therefore we can colour $G[S_D(x)]$ with colours from $\{1,\ldots,\chi\}$, $G[S_{D-1}(x)]$ with colours from $\{\chi+1,\ldots,2\chi\}$, $G[S_{D-2}(x)]$ with colours from $\{1,\ldots,\chi\}$, and alternating so on and so forth, until we reach $G[S_0(x)]={x}$ (which gets a single colour, of course). This procedure colours all of $G$, and indeed gives us a proper colouring on $G$. (Edges within a single layer $G[S_b(x)]$ do not cause problems, since this colouring on $G$ arises from properly colouring each layer; on the other hand an edge between layers can only exist between adjacent layers by Lemma \ref{layers}, and such an edge can never receive the same colour on both its edges, since we are using entirely different colour schemes on alternating layers!)
\end{proof}

\begin{thm} \label{restate}
For every $k \geq 2$, if $\chi (G) \geq 2^k$ then $G$ contains a $K_k$-minor.
\end{thm}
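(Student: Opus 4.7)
The plan is to induct on $k$, halving the chromatic number via Proposition \ref{maxcol} and then rebuilding the missing $k$-th branch set using the layered structure from Lemma \ref{layers}.

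For the base case $k=2$, the hypothesis $\chi(G) \geq 4$ forces $G$ to contain at least one edge, hence a $K_2$-subgraph, and therefore a $K_2$-minor. For the inductive step, suppose $k \geq 3$ and that the statement holds for $k-1$. Given $G$ with $\chi(G) \geq 2^k$, first reduce to the case that $G$ is connected by replacing $G$ with a component of chromatic number at least $2^k$. Pick any $x \in V(G)$; Proposition \ref{maxcol} then gives some $d$ with $\chi(G[S_d(x)]) \geq \chi(G)/2 \geq 2^{k-1}$. Note $d \geq 1$, since $G[S_0(x)]$ is a single vertex and has chromatic number $1$. By the inductive hypothesis, $G[S_d(x)]$ contains a $K_{k-1}$-minor, which (by Proposition \ref{minoralt}) is witnessed by disjoint nonempty subsets $V_1, \ldots, V_{k-1} \subseteq S_d(x)$ such that each $G[V_i]$ is connected and $e(V_i, V_j) > 0$ for all distinct $i,j$.

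The key construction is an extra set $V_k$ that will play the role of the missing branch set: it must be connected, disjoint from each $V_i$, and adjacent to all of them. For each $i \in \{1,\ldots,k-1\}$ I fix a representative $y_i \in V_i \subseteq S_d(x)$ and iterate Lemma \ref{layers} to obtain a path
\[
P_i = y_i\, z_{i,d-1}\, z_{i,d-2}\, \cdots\, z_{i,1}\, x, \qquad z_{i,j} \in S_j(x).
\]
Set $V_k = \bigcup_{i=1}^{k-1} \bigl(V(P_i) \setminus \{y_i\}\bigr)$. Since every vertex of $V_k$ lies in $S_0(x) \cup \cdots \cup S_{d-1}(x)$, while each $V_i \subseteq S_d(x)$, the set $V_k$ is disjoint from every $V_i$. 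Because each $V(P_i) \setminus \{y_i\}$ contains the common vertex $x$, the induced subgraph $G[V_k]$ is connected. Finally, each edge $y_i z_{i,d-1}$ witnesses $e(V_i, V_k) > 0$. Combined with the edges inherited from the $K_{k-1}$-minor in $G[S_d(x)]$, the sets $V_1, \ldots, V_k$ then exhibit a $K_k$-minor in $G$ via Proposition \ref{minoralt}.

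The main obstacle I anticipate is organizational rather than deep: one must be careful that $V_k$ is simultaneously connected and disjoint from each $V_i$. Both follow directly from the layered decomposition (Lemma \ref{layers}) once the iterated shortest-path construction is set up, so the real content is the packaging --- Proposition \ref{maxcol} concentrates enough chromatic number into a single layer that the inductive hypothesis applies there, while the deeper layers remain free to be glued together at $x$ to form the extra branch set.
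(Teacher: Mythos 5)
Your proof is correct and follows essentially the same strategy as the paper's: induct on $k$, use Proposition~\ref{maxcol} to concentrate chromatic number into a single layer $G[S_d(x)]$, apply the inductive hypothesis there to get a $K_{k-1}$-minor, and then manufacture a $k$-th branch set from the inner layers, which are adjacent to $S_d(x)$ by Lemma~\ref{layers}. The only cosmetic difference is that you take $V_k$ to be a union of shortest paths from representatives $y_i$ back to $x$, whereas the paper simply takes $V_k = \bigcup_{b=0}^{d-1} S_b(x)$; both choices yield a connected set disjoint from and adjacent to each $V_i$.
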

\begin{proof}
We will proceed by induction on $k$.
The base case is $k=2$. In this case, if $\chi (G) \geq 4$ then in particular, since $\chi(G)>1$, $G$ has an edge, so $G$ certainly contains a $K_2$-minor.
\\
Now assume $k > 2$, and suppose the result holds for the case $k-1$. That is, suppose that whenever $H$ is any graph with $\chi(H) \geq 2^{k-1}$, then $H$ contains a $K_{k-1}$-minor. We must show that if $\chi (G) \geq 2^k$ then $G$ contains a $K_k$-minor. Without loss of generality, we may assume $G$ is connected. (For general $G$, $\chi (G)$ is the maximum of the chromatic numbers of its components. Therefore if $\chi(G) \geq 2^k$, then $G$ has a component $C$ with $\chi(C) \geq 2^k$. If the result holds for connected graphs, then $C$ contains a $K_k$-minor, hence so does $G$.)
\\

\noindent Consider the induced subgraph $H=G[S_d(x)]$, where $\chi (G) \geq 2^k$, $x$ is any fixed vertex of $G$ and $d \geq 0 $ is as in Proposition \ref{maxcol}. Then $\chi (H)=\chi (G[S_d(x)]) \geq \chi (G) /2 \geq 2^{k-1}$, so by the inductive hypothesis applied to $H$, $H$ contains a $K_{k-1}$-minor.
\\

\noindent Now it is not hard to show that $G$ contains a $K_k$-minor -- already the layer $G[S_d(x)]$ has a $K_{k-1}$-minor, but by Lemma \ref{layers} there is a path from each vertex of $G[S_d(x)]$ to $x$. We can contract all but one edge in each such path, and finish by noting that adding a new vertex to $K_{k-1}$ adjacent to all the other vertices produces $K_k$.

\noindent Let us argue in more detail, using our alternative formulation of minors. First note that $d>0$. (Indeed, if $d = 0 $, then $1=\chi (G[{x}])=\chi (G[S_0(x)]) \geq \chi (G) /2$ shows that $2 \geq \chi (G)\geq 2^k >2$, a contradiction.) Let $V(K_{k-1})= \{ v_1, \ldots, v_{k-1} \}$. Since $K_{k-1} \leq_m H$, there are disjoint nonempty subsets $V_1$, \ldots, $V_{k-1}$ of $V(H)=S_d(x)$ such that each $H[V_i]$ is connected, and $e(V_i,V_j)>0$ whenever $v_iv_j \in E(K_{k-1})$.
\\

\noindent Consider the graph $K_k \cong (V(K_{k-1})\cup \{ x_0 \}, E(K_{k-1}) \cup \{ v_1x_0,\ldots , v_{k-1}x_0 \})$. This is a copy of $K_k$ obtained by adding a new vertex $x_0$ to $K_{k-1}$, adjacent to every other vertex. I show this a minor of $G$. Well, consider the nonempty subsets $V_1, \ldots, V_{k-1}, V_k$ of $V(G)$, where $V_k:= \bigcup_{b=0}^{d-1}S_b(x) \ni x$. These sets witness that $G$ has a $K_k$-minor:

\begin{itemize}

\item They are pairwise disjoint, because $V_1$, \ldots, $V_{k-1}$ were already pairwise disjoint, and $\bigcup_{i=1}^{k-1}V_i \subseteq V(H)=S_d(x) \subseteq V(G) \backslash \bigcup_{b=0}^{d-1}S_b(x) = V(G) \backslash V_k$.

\item Each $G[V_i]$ is connected: for each $1 \leq i \leq k-1$, $G[V_i]=H[V_i]$, which is connected by assumption. (Both these graphs have vertex set $V_i$ and edge set $E(G) \cap V_i^{(2)} = E(G) \cap V_i^{(2)} \cap V(H)^{(2)} = E(H) \cap V_i^{(2)}$.) $G[V_k]$ is connected since for all $u,w \in V(G[V_k])=V_k$, there are paths in $G$ from $u$ to $x$ and from $x$ to $w$, which concantenated produce a path in $G$ from $u$ to $w$. To be clear, say $u \in S_b(x)$, where $0 \leq b \leq d-1$. If $b=0$ then there is a trivial path from $u$ to $x$, of length $0$. Otherwise, apply Lemma \ref{layers} repeatedly to see that $\exists u_1 \in S_{b-1}(x)$ such that $u_1u \in E(G)$, $\exists u_2 \in S_{b-2}(x)$ such that $u_2u_1 \in E(G)$, $\ldots$, $\exists u_b \in S_{0}(x)$ such that $u_bu_{b-1} \in E(G)$. This yields a path $u u_1 \ldots u_b$ from $u$ to $x$. Similarly, there is a path from $w$ to $x$.

\item $e(V_i,V_j)>0$ whenever $v_iv_j \in E(K_k)$: this holds if $v_iv_j$ is an edge in the copy of $K_{k-1}$ we started with, so suppose we have an edge $v_ix_0 \in E(K_k)$, where $1 \leq i < k$. We must show that $e(V_i,V_k)>0$. First, note that $V_i \neq \emptyset$, so $\exists y \in V_i$. Then Lemma \ref{layers} says $\exists z \in S_{d-1}(x)$ such that $zy \in E(G)$. Since $y \in V_i$ and $z \in V_k$, this shows $e(V_i,V_k)>0$ as required.
\end{itemize}

\noindent This completes the proof that $G$ has a $K_k$-minor.
\end{proof}
\section{Conclusion}

\noindent We have shown that for every $k \geq 2$, there is a constant $c(k)$ such that if $\chi (G) \geq c(k)$ then $G$ contains $K_k$ as a minor. Our choice of $c(k)$ was exponential in $k$, but Kostochka (1984) achieved $c(k)=\mathcal{O}(k \sqrt{log(k)})$.

\noindent Hadwiger's Conjecture says we can do better still:

\begin{conjecture}[Hadwiger's Conjecture] \label{hadwiger}
For every $k \geq 2$, if $\chi (G) \geq k$ then $G$ contains $K_k$ as a minor.
\end{conjecture}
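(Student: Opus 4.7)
The plan is to attempt an induction on $k$, mirroring the inductive strategy used for $k = 3, 4$ and in Theorem \ref{restate}, while trying to avoid the factor-of-two loss that forced the chromatic threshold to be exponential. The base cases $k \in \{2, 3, 4\}$ are already established in the propositions above, so the induction would start from $k \geq 5$ with the inductive hypothesis that Hadwiger's Conjecture holds for $k - 1$.

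For the inductive step, I would first reduce to the case where $G$ is $(k-1)$-connected, in the spirit of the $k = 4$ proof: if $G$ has a cutset $S$ with $|S| \leq k - 2$, split $G$ along $S$ into lobes $G_1, G_2$, add all missing edges on $S$ to each lobe to form $G_1^+, G_2^+$, and show by a colouring-gluing argument that some $G_i^+$ satisfies $\chi(G_i^+) \geq k$. The inductive hypothesis then yields a $K_k$-minor in $G_i^+$, which one would like to transfer back to $G$ by using internally disjoint paths in the other lobe to substitute for any added edges among $S$; the Fan Lemma (Lemma \ref{fan}) would likely play a role in constructing those replacement paths. Once $G$ is $(k-1)$-connected, the aim would be to pick a vertex $v$ of degree at least $k - 1$ (such a vertex exists because a $k$-chromatic graph contains a subgraph of minimum degree $\geq k - 1$), apply some form of the inductive hypothesis inside $G - v$ or inside $N(v)$ to obtain a $K_{k-1}$-minor, and then attach $v$ to every branch set by routing disjoint paths from $v$ through its neighbourhood.

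The main obstacle is exactly the content of the conjecture. The step from a $K_{k-1}$-minor to a $K_k$-minor requires finding a branch set (or vertex) adjacent in the minor to $k - 1$ prescribed branch sets simultaneously, and no purely inductive argument is known that accomplishes this without either losing a constant factor, as Theorem \ref{restate} does, or appealing to a very deep structural theorem. In reality, the conjecture is open for $k \geq 7$: the only confirmed cases beyond $k = 4$ are $k = 5$ (Wagner, by reduction to the Four Colour Theorem via his structure theorem for $K_5$-minor-free graphs) and $k = 6$ (Robertson--Seymour--Thomas, by a further reduction to the Four Colour Theorem). A realistic scaled-down version of the proposal would therefore be to prove the $k = 5$ case assuming the Four Colour Theorem: show that a minimum counterexample is $5$-connected and $K_5$-minor-free, invoke Wagner's structure theorem to decompose it into planar pieces and copies of the Wagner graph $V_8$ glued along small cliques, $4$-colour each planar piece by the Four Colour Theorem, and assemble a $4$-colouring of the whole graph. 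I expect even this restricted plan to be substantially harder than anything developed in this report, with the Wagner structure theorem itself being the dominant difficulty.
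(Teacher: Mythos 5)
You correctly recognize that this is an open conjecture rather than a theorem: the paper does not offer a proof, and records only that the cases $k \leq 6$ are known (with $k = 5, 6$ following from the Four Colour Theorem via Robertson--Seymour--Thomas (1993)) while $k \geq 7$ remains unresolved. Your diagnosis of the obstruction --- that the inductive step from a $K_{k-1}$-minor to a $K_k$-minor requires essentially the full strength of the conjecture itself, and that Theorem \ref{restate} sidesteps this at the cost of a factor-of-two loss per step, hence the exponential threshold $2^k$ --- matches the paper's own framing of \ref{restate} as the weaker, achievable result. Your proposed fallback of proving $k = 5$ via Wagner's structure theorem plus the Four Colour Theorem is indeed the standard route and is consistent with what the paper cites but does not attempt; there is no proof to compare against, and your assessment that none is available with the tools developed here is accurate.
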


\noindent This is essentially the nicest result we can hope to get. For instance if $k=2$ and $\chi (G) =1$ then $G$ has no edges, so it cannot contain a $K_2$-minor.

\noindent Hadwiger's Conjecture is known to hold for the cases $k \leq 6$:

\begin{description}
\item [k = 2:] trivial; a graph with chromatic number larger than $1$ must have an edge, so it contains $K_2$ (as a subgraph, hence as a minor).
\item [k = 3, 4:] we proved these cases above.
\item [k = 5, 6:] these cases are implied by the Four Colour Theorem. See Robertson, Seymour, Thomas (1993).
\item [k > 6 :] higher cases remain unresolved.
\end{description}
\newpage

\end{document}